\newcommand{\lvt}{\left|\kern-1.35pt\left|\kern-1.3pt\left|}
\newcommand{\rvt}{\right|\kern-1.3pt\right|\kern-1.35pt\right|}
\newtheorem{thm}{Theorem}[section]
\theoremstyle{remark}
\def\lla{\langle{\kern-2.5pt}\langle} 
\def\rra{\rangle{\kern-2.5pt}\rangle}
\begin{document}
 
\title[Matrix Orthogonal Polynomials: A~Riemann--Hilbert approach]{Matrix Orthogonal Polynomials: \\
A~Riemann--Hilbert approach
 \\[2ex] 
{\normalfont
\resizebox{.395\hsize}{!}{
{\emph{\small{\MakeLowercase{in memory of} J\MakeLowercase{osé} C\MakeLowercase{arlos} P\MakeLowercase{etronilho}}}}}}
}


\author
{Am\'\i lcar Branquinho}
\address{CMUC,
Department of Mathematics,
University of Coimbra, 3000-143 Coimbra,
Portugal}
\email{ajplb@mat.uc.pt}

\author
{Ana Foulqui\'e-Moreno}
\address{CIDMA,
Departamento de Matem\'atica,
Universidade de Aveiro, 3810-193 Aveiro,
Portugal}
\email{foulquie@ua.pt}

\author
{Assil Fradi}
\address{Mathematical Physics Special Functions and Applications Laboratory,
The \linebreak
Higher School of Sciences and Technology of Hammam Sousse,
University of Sousse, Sousse 4002,
Tunisia}
\email{assilfradi@ua.pt}

\author
{Manuel Ma\~nas}
\address{Departamento de F\'\i sica Te\'orica,
Universidad Complutense de Madrid, Plaza Ciencias 1, 28040-Madrid,
Spain}
\email{manuel.manas@ucm.es}


\date{\today} 
\subjclass[2020]{42C05,33C45,33C47,60J10,60Gxx}

\keywords{Riemann--Hilbert problems; matrix Pearson equations; discrete integrable systems; non-Abelian discrete Painlev\'e IV equation}
 
\begin{abstract}
In this work we show how to get advantage from the Rie\-mann--Hilbert analysis in order to obtain information about the matrix orthogonal polynomials and functions of second kind associated with a weight matrix. We deduce properties for the recurrence relation coefficients from differential properties of the weight matrix. We take the matrix polynomials of Hermite, Laguerre and Jacobi type as a case~study.
\end{abstract}

\maketitle

\section*{Introduction} \label{sec:0}

The main purpose of this paper is to show how the Riemann--Hilbert analysis could help in order to derive algebraic and analytic properties for the sequences of matrix orthogonal polynomials associated with a weight matrix in the class of Hermite, Laguerre or Jacobi, presented in a sequel of works~\cite{BAFMM,BFAM,PAMS_AB_AF_MM}. 
Hence we will take that works as a basis and the references therein.

Since Krein presented a matrix extension of real orthogonal polynomials in 1949 (cf.~\cite{Krein1,Krein2}), several authors considered these type of families~\cite{nikishin,Berezanskii,geronimo}. 
Also, several examples have been considered of matrix-valued orthogonal polynomials, most of them associated with second order differential equations (cf.~\cite{grummmm,grummmmpatirao1}.

The study of equations for the recursion coefficients for orthogonal polynomials constitutes a subject of current interest. The question of how some properties of the weight, for example to satisfy a Pearson type equation,
translates to the recursion coefficients has been treated in several places, for a review see~\cite{VAssche}. 
Here we will see that these properties can be derived directly from a generalized matrix Pearson differential equation.

Now, we present the structure of the work:
In Section~\ref{sec:1} we state the basic facts of the theory of matrix orthogonal polynomials that will be used in the text. 
We begin with the definition of regular weight matrix and the three term recurrence relation for the matrix orthogonal polynomials. Next we reinterpret the Berezanskii matrix orthogonal polynomials in terms of scalar orthogonal polynomials.
We also make a reinterpretation of the matrix orthogonality in terms of the Gauss--Borel factorization of the moment matrix, and at the end of the section we state a general Riemann--Hilbert problem associated with the sequences of monic matrix orthogonal polynomials.
In Section~\ref{sec:2} we present the general weights that we have considered in the previous works on matrix orthogonal polynomials. We will see that all the properties of these systems come from the analytical properties of a fundamental matrix associated with the matrix functions $Y_n^{\mathsf L}$ (as well as~$Y_n^{\mathsf R}$) and of the weight matrix, $\omega$, that will be called $M_n^\mathsf L$, $M_n^\mathsf R$.
As the weight functions considered here satisfy a generalized Pearson matrix equation we could present in Section~\ref{sec:2} a first order differential equations for the $Y_n^{\mathsf L}$ and~$Y_n^{\mathsf R}$.
From these first order differential equations we derive a second order ones for the $Y_n^{\mathsf L}$ and $Y_n^{\mathsf R}$ respectively. This is the subject of Section~\ref{sec:3}.
There, we will construct second order differential operators that have sequence of Berezanskii matrix polynomials (respectively, of second kind matrix functions) as eigenfunctions. 
We end this work, with Section~\ref{sec:4}, by showing some examples of discrete matrix Painlev\'e equations for the three term recurrence relation coefficients associated with generalized Hermite, Laguerre and Jacobi type weight matrix.

\section{Matrix biorthogonality} \label{sec:1}

Let
\begin{align*}
 \omega
=
\begin{bmatrix}
\omega^{(1,1)} & \cdots & \omega^{(1,N)} \\
 \vdots & \ddots & \vdots \\
\omega^{(N,1)} & \cdots & \omega^{(N,N)}
\end{bmatrix}
 ,
\end{align*}
 be a $\displaystyle N \times N $ weight matrix with support on the real line,~$\displaystyle \mathbb R $ or on a smooth oriented non self-intersecting curve~$\gamma$, i.e. $\displaystyle \omega \in \mathbb M_N (\mathbb C) $ and $\displaystyle \omega^{(j,k)} $ is, for each $\displaystyle j,k \in \big\{ 1, \ldots , N \big\} $, a complex weight with support on the real line or on a smooth oriented non self-intersecting 
curve~$\gamma$, 
in the complex plane $\mathbb C$, i.e.
 $ \omega^{(j,k)} $ 
is, for each $j,k \in \big\{ 1, \ldots , N \big\}$, a complex weight with support on $\gamma$.
We define the moment of order~$\displaystyle n $ associated with~$\displaystyle \omega $~as 
\begin{align*}
\omega_n = \int_\gamma x^n \, \omega (x) \, \frac{\operatorname d x}{2\pi \operatorname i} , && n \in \mathbb N
\coloneqq \big\{ 0,1, \ldots \big\} .
\end{align*}
We say that $ \omega $ is \emph{regular} if the moments, $\omega_n$, $n \in \mathbb N$, exist and the $n$-th matrix of~moments, 
\begin{align*}
\pmb{\mathsf U}_n 
 \coloneqq 
\begin{bmatrix}
\omega_{0} & 
\cdots 
& \omega_n \\
\vdots 
& \ddots 
& \vdots \\
\omega_{n} 
& \cdots
& \omega_{2n} 
\end{bmatrix} , && n \in \mathbb N ,
\end{align*}
is such that
\begin{align} \label{eq:regular}
\det \pmb{\mathsf U}_n \not = 0 , && n \in \mathbb N.
\end{align}
In this way, we define a \emph{sequence of matrix monic polynomials},
 $\big\{ {P}_n^{\mathsf L} \big\}_{n\in \mathbb N} $, where $\deg {P}_n^{\mathsf L} (z) = n$, $n \in \mathbb N$, \emph{left orthogonal} and \emph{right orthogonal}, $\big\{ P_n^{\mathsf R} \big\}_{n\in\mathbb N} $, where $\deg {P}_n^{\mathsf R} (z) = n$, $n \in \mathbb N$, with respect to a regular weight matrix $\omega$, by the conditions,
\begin{align}
\label{eq:ortogonalidadL}
 \int_\gamma {P}_n^{\mathsf L} (z) \omega (z) z^k \frac{\operatorname d z}{2\pi\operatorname i}
 = \delta_{n,k} C_n^{-1} , \\
 \label{eq:ortogonalidadR}
 \int_\gamma z^k \omega (z) {P}_n^{\mathsf R} (z) \frac{\operatorname d z}{2\pi\operatorname i}
 = \delta_{n,k} C_n^{-1} , 
\end{align}
for $k \in \big\{ 0, 1, \ldots , n \big\}$ and $n \in \mathbb N$,
where $C_n $ is a nonsingular matrix.

We can see (cf.~\cite{BAFMM}) that a sequence of monic polynomials $\big\{ P_n^{\mathsf L} \big\}_{n \in \mathbb N} $, respectively, $\big\{ P_n^{\mathsf R} \big\}_{n \in \mathbb N} $, is defined by~\eqref{eq:ortogonalidadL}, respectively,~\eqref{eq:ortogonalidadR}, with respect to a regular weight matrix,~$\omega$.

Notice that neither the weight matrix is requested to be Hermitian 
nor the curve~$\gamma$ to be on the real line, i.e., we are dealing, in principle with nonstandard orthogonality and, consequently, with biorthogonal matrix polynomials instead of orthogonal matrix polynomials, i.e.
the sequences of matrix polynomials $\big\{ P_n^{\mathsf L} \big\}_{n \in \mathbb N }$ and $\big\{ P_n^{\mathsf R} \big\}_{n \in \mathbb N }$ are biorthogonal with respect to a weight matrix functions $\omega$, as from~\eqref{eq:ortogonalidadL} and~\eqref{eq:ortogonalidadR}
\begin{align} \label{eq:biorthogonality}
\int_\gamma P_n^{\mathsf L} (t) \omega (t) {P}_m^{\mathsf R} (t) \frac{\operatorname d t}{ 2 \pi \operatorname i}
 = \delta_{n,m} C_n^{-1}, && n , m \in \mathbb N .
\end{align}
 As the polynomials are chosen to be monic, we can write
\begin{align*}
 {P}_n^{\mathsf L} (z) & =\operatorname I z^n + p_{\mathsf L,n}^1 z^{n-1} + p_{\mathsf L,n}^2 z^{n-2} + \cdots + p_{\mathsf L,n}^n , 
 \\
{P}_n^{\mathsf R} (z) & =\operatorname I z^n + p_{\mathsf R,n}^1 z^{n-1} + p_{\mathsf R,n}^2 z^{n-2} + \cdots + p_{\mathsf R,n}^n , 
\end{align*}
with matrix coefficients $p_{\mathsf L, n}^k , p_{\mathsf R, n}^k \in 
\mathbb C^{N\times N} $, $k = 0, \ldots, n $ and $n \in \mathbb N $ (imposing that $p_{\mathsf L,n}^0 = p_{\mathsf R,n}^0= \operatorname I $, $n \in \mathbb N $). Here $\operatorname I\in\mathbb C^{N\times N}$ denotes the identity matrix.

From~\eqref{eq:ortogonalidadL} we deduce that the Fourier coefficients of the expansion
\begin{align*}
z {P}^{\mathsf L}_n (z) = \sum_{k=0}^{n+1} \ell_{\mathsf L,k}^n {P}^{\mathsf L}_k (z) ,
\end{align*}
are given by $\ell_{\mathsf L,k}^n =\pmb 0 $, $k = 0 , 1, \ldots , n-2 $ (here we denote the zero matrix by~$\pmb 0$),
 $\ell_{\mathsf L,n-1}^n = C_n^{-1} C_{n-1} $ (is a direct consequence of orthogonality conditions), $\ell_{\mathsf L,n+1}^n = \operatorname I $ (as $ {P}^{\mathsf L}_n (z) $ are monic polynomials) and 
 $\ell_{\mathsf L,n}^n = 
p_{\mathsf L,n}^1 - p_{\mathsf L,n+1}^1 $ $\eqqcolon
\beta^\mathsf L_n $
(by comparison of the coefficients, assuming $C_0 = \operatorname I$). 

Hence, assuming the orthogonality relations~\eqref{eq:ortogonalidadL}, we conclude that the sequence of monic polynomials $\big\{ {P}^{\mathsf L}_n \big\}_{n\in\mathbb N } $ is defined by the three term recurrence relation
\begin{align} \label{eq:ttrr}
z {P}^{\mathsf L}_n (z) = {P}^{\mathsf L}_{n+1} (z) + \beta^{\mathsf L}_n {P}^{\mathsf L}_{n} (z) + \gamma^{\mathsf L}_n {P}^{\mathsf L}_{n-1} (z), && n \in \mathbb N , 
\end{align}
with recursion coefficients 
\begin{align*}
\beta^\mathsf L_n& \coloneqq
p_{\mathsf L,n}^1 - p_{\mathsf L,n+1}^1, &
\gamma^{\mathsf L}_{n+1} 
 &\coloneqq C_{n+1}^{-1} C_{n},
 && n \in \mathbb N ,
\end{align*}
 and initial conditions, $ {P}^{\mathsf L}_{-1} =\pmb 0 $ and 
 $ {P}^{\mathsf L}_{0} = \operatorname I $.

Any sequence of monic matrix polynomials, $\big\{ P^{\mathsf R}_{n} \big\}_{n\in\mathbb N } $, with $\deg P^{\mathsf R}_n= n $, biorthogonal with respect to $\big\{ P_n^{\mathsf L} \big\}_{n\in\mathbb N }$ and $ \omega (z)$, i.e.~\eqref {eq:biorthogonality}
is fulfilled, also satisfies a three term relation.
To prove this we proceed in the same way as in the left case, arriving to
$P^{\mathsf R}_{-1} =\pmb 0 $, $P^{\mathsf R}_{0} = \operatorname I$,
\begin{align} 
 & z P^{\mathsf R}_n (z) = P^{\mathsf R}_{n+1} (z) + P^{\mathsf R}_{n} (z) \beta^{\mathsf R}_n + P^{\mathsf R}_{n-1} (z) \gamma^{\mathsf R}_n ,& n &\in\mathbb N ,
\label{eq:rightttrr}
\end{align}
where 
\begin{align}
\label{eq:rightcoeff}
\beta_n^{\mathsf R} &\coloneqq C_n \beta^{\mathsf L}_n C_n^{-1}, &\gamma^{\mathsf R}_n &\coloneqq C_n \gamma^{\mathsf L}_nC_n^{-1} = C_{n-1} C_n^{-1},
\end{align}
and the orthogonality conditions~\eqref{eq:ortogonalidadR} are satisfied.

\subsection{Berezanskii matrix orthogonal polynomials}\label{sec:12}

\subsubsection{First example}

The notion of matrix orthogonality can be seen coming from the scalar one. In fact, following the ideas of Berezanskii, in~\cite{Berezanskii}, given two sequences of monic polynomials, $\big\{ p_n^1 \big\}_{n \in \mathbb N}$ and $\big\{ p_n^2 \big\}_{n \in \mathbb N}$, orthogonal with respect to $\omega^1$, $\omega^2$, respectively, with the same support of orthogonality, $\Omega \subset \mathbb R$,~i.e.
\begin{align*}
\int_\Omega p_n^1 (t) \omega^1 (t) p_m^1 (t) \, \frac{\operatorname d t}{2\pi\operatorname i}
&= \kappa_n^1 \, \delta_{n,m} , 
 \\
\int_\Omega p_n^2 (t) \omega^2 (t) p_m^2 (t) \, \frac{\operatorname d t}{2\pi\operatorname i}
&= \kappa_n^2 \, \delta_{n,m} , && n , m \in \mathbb N ,
\end{align*}
with $\kappa_n^1 , \kappa_n^2 
>0$, $n \in \mathbb N$; 
we can construct a matrix sequence of monic polynomial
\begin{align}
\label{eq:diagonalpol}
\mathbb P_n (x) = \frac 1 2
\begin{bmatrix} p_n^1 (x) + p_n^2 (x) & p_n^1 (x) - p_n^2 (x) \\[.1cm]
p_n^1 (x) - p_n^2 (x) & p_n^1 (x) + p_n^2 (x)
\end{bmatrix}, && n \in \mathbb N ,
\end{align}
orthogonal with respect to the weights matrix in $\Omega$
\begin{align*}
\mathbb W (x) = \frac 1 2
\begin{bmatrix} \omega^1 (x) + \omega^2 (x) & \omega^1 (x) - \omega^2 (x) \\[.1cm]
\omega^1 (x) - \omega^2 (x) & \omega^1 (x) + \omega^2 (x) \end{bmatrix}.
\end{align*}
In fact,
\begin{align}
\label{eq:ortexemplo}
\int_\Omega
\mathbb P_n (t)
\mathbb W (t)
\mathbb P_m^\top (t)
\, \frac{\operatorname d t}{2 \pi \operatorname i}
=
\mathbb K_n
 \, \delta_{n,m},
 && n,m \in \mathbb N ,
\end{align}
where $\mathbb K_n$ is an invertible matrix given by
\begin{align*}
\mathbb K_n \coloneqq \frac 1 2
\begin{bmatrix}
\kappa_n^1 + \kappa_n^2 & \kappa_n^1 - \kappa_n^2 \\[.1cm]
\kappa_n^1 - \kappa_n^2 & \kappa_n^1 + \kappa_n^2 \end{bmatrix} ,
 && n \in \mathbb N .
\end{align*}
This example can be deconstructed in order to get the scalar orthogonality.
In fact, we can rewrite $\mathbb P_n$ and $\mathbb W$ as
\begin{align*}
\mathbb P_n (x) &
= \frac 1 2 \, \pmb \alpha \, 
\begin{bmatrix}
p_n^1 (x) & 0 \\
0 & p_n^2 (x) 
\end{bmatrix}
 \,
\pmb \alpha^\top
 , && n \in \mathbb N , \\
\mathbb W (x) &
= \frac 1 2
\, \pmb \alpha \,
\begin{bmatrix}
\omega^1 (x) & 0 \\
0 & \omega^2 (x) 
\end{bmatrix}
\, \pmb \alpha^\top
 ,
\end{align*}
with
\begin{align}\label{eq:inv}
\pmb \alpha =
\begin{bmatrix}
1 & -1 \\ 1& 1
\end{bmatrix}, &&
\pmb \alpha^{-1} =
\frac 1 2
\begin{bmatrix}
1 & -1 \\ 1& 1
\end{bmatrix}^\top .
\end{align}
Now, taking into account that~\eqref{eq:inv}
equation~\eqref{eq:ortexemplo} takes the form
\begin{multline*}
\int_\Omega
\mathbb P_n (t)
\mathbb W (t)
\mathbb P_m^\top (t)
\, \frac{\operatorname d t}{2 \pi \operatorname i}
 \\ =
\frac 1 2 \,
\pmb \alpha \,
 \int_\Omega
\begin{bmatrix}
p_n^1 (t) & 0 \\
0 & p_n^2 (t) 
\end{bmatrix}
\begin{bmatrix}
\omega^1 (t) & 0 \\
0 & \omega^2 (t) 
\end{bmatrix}
\begin{bmatrix}
p_m^1 (t) & 0 \\
0 & p_m^2 (t) 
\end{bmatrix}^\top
\, \frac{\operatorname d t}{2 \pi \operatorname i}
\, \pmb \alpha^\top
,
\end{multline*}
and so
\begin{align*}
\int_\Omega
\mathbb P_n (t)
\mathbb W (t)
\mathbb P_m^\top (t)
\, \frac{\operatorname d t}{2 \pi \operatorname i}
 =
\frac 1 2
 \, \pmb \alpha \, 
\begin{bmatrix}
\kappa_n^1 & 0 \\ 0 & \kappa_n^2
\end{bmatrix}
\, \pmb \alpha^\top
\, \delta_{n,m}
 , && n,m \in \mathbb N
 .
\end{align*}
Now, applying once time~\eqref{eq:inv}, we get from the last equation
\begin{align*}
\int_\Omega
\begin{bmatrix}
p_n^1 (t) \omega^1 (t) p_m^1 (t) & 0 \\
 0 & p_n^2 (t) \omega^2 (t) p_m^2 (t)
\end{bmatrix}
\, \frac{\operatorname d t}{2 \pi \operatorname i}
 =\begin{bmatrix}
\kappa_n^1 & 0 \\ 0 & \kappa_n^2
\end{bmatrix}
 \, \delta_{n,m}
, && n,m \in \mathbb N .
\end{align*}
Hence we recover the initial data written in matrix notation.

When we consider, as in~\cite{Berezanskii}, some specific Jacobi weights in $[-1,1]$, 
\begin{align*}
\omega^1 (x) = \sqrt{\frac{1-x}{1+x}} , && \omega^2 (x) = \sqrt{\frac{1+x}{1-x}} , && x \in [-1,1] .
\end{align*}
In the notation just given, we have
\begin{align*}
\mathbb P_n (x) & = 
\frac{1}{2^n}
\begin{bmatrix}
U_n (x) & - U_{n-1} (x) \\[.1cm]
 - U_{n-1} (x) & U_n (x)
\end{bmatrix} , && n \in \mathbb N , \\
\mathbb W (x) & =
\begin{bmatrix}
\sqrt{1-x^2} & x \, \sqrt{1-x^2} \\[.1cm] x \, \sqrt{1-x^2} & \sqrt{1-x^2} 
\end{bmatrix}
 , && x \in [-1,1] .
\end{align*}
Here the polynomials
$U_n (x) $, $n \in \mathbb N$, are the second kind Chebychev orthogonal polynomials, i.e.
\begin{align*}
U_n (x) = \frac{\sin \big( (n+1) t \big)}{\sin (t)}, && \cos (t) = x, && n \in \mathbb N . 
\end{align*}

\subsubsection{Second example}

It is important to notice that, if we depart from two left and right matrix sequences, $\big\{ P_n^{1,\mathsf L}\big\}$, $\big\{ P_n^{1,\mathsf R}\big\}$, and $\big\{ P_n^{2,\mathsf L}\big\}$, $\big\{ P_n^{2,\mathsf R}\big\}$, biorthogonal with respect to
$W^1$, $W^2$, respectively, with support on the same curve $\gamma \subset \mathbb C$, then the matrix sequence of polynomials
\begin{align*}
\mathbb P_n^{\mathsf L} (x)
 & =
\frac 1 2
\begin{bmatrix}
P_n^{1,\mathsf L} (x) + P_n^{2,\mathsf L} (x)
& P_n^{1,\mathsf L} (x) - P_n^{2,\mathsf L} (x) \\[.1cm]
 P_n^{1,\mathsf L} (x) - P_n^{2,\mathsf L} (x) & 
P_n^{1,\mathsf L} (x) + P_n^{2,\mathsf L} (x)
\end{bmatrix}, 
 \\
\mathbb P_n^{\mathsf R} (x)
 & =
\frac 1 2
\begin{bmatrix}
P_n^{1,\mathsf R} (x) + P_n^{2,\mathsf R}(x) &
P_n^{1,\mathsf R} (x) - P_n^{2,\mathsf R} (x) \\[.1cm]
P_n^{1,\mathsf R} (x) - P_n^{2,\mathsf R} (x) & 
P_n^{1,\mathsf R} (x) + P_n^{2,\mathsf R} (x)
\end{bmatrix}, && n \in \mathbb N ,
\end{align*}
are biorthogonal with respect to $\mathbb W $ defined by
\begin{align*}
\mathbb W (x)
 & =
\frac 1 2
\begin{bmatrix}
W^{1} (x) + W^{2}(x) & W^{1} (x) - W^{2} (x) \\[.1cm]
W^{1} (x) - W^{2} (x) & 
W^{1} (x) + W^{2} (x)
\end{bmatrix}
&& \text{on} && \gamma ,
\end{align*}
i.e. we have
\begin{align*}
\int_\gamma
\mathbb P_n^\mathsf L (t)
\mathbb W (t)
\mathbb P_m^\mathsf R (t)
\, \operatorname d t
 =
\mathbb K_n^{-1} \, \delta_{n,m} , && n,m \in \mathbb N .
\end{align*}
where
\begin{align*}
\mathbb K_n^{-1}
 =
\frac 1 2 
\begin{bmatrix}
\operatorname I &
-\operatorname I \\
\operatorname I & 
\operatorname I
\end{bmatrix}
\begin{bmatrix}
\big( C_n^1 \big)^{-1} &
\pmb 0 \\
\pmb 0 & 
\big( C_n^2 \big)^{-1}
\end{bmatrix}
\begin{bmatrix}
\operatorname I &
-\operatorname I \\
\operatorname I & 
\operatorname I
\end{bmatrix}^\top , && n \in \mathbb N ,
\end{align*}
and $ C_n^1 $, $ C_n^1 $, are invertible matrices coming from
\begin{align*}
\int_\gamma
P_n^{j, \mathsf L} (t)
 W^j (t)
P_m^{j, \mathsf R} ( (t)
\, \operatorname d t
 & =
\big( C_n^j \big)^{-1} \, \delta_{n , m} ,
 && j = 1,2 , && n,m \in \mathbb N .
\end{align*} 
It is important to notice that
\begin{align*}
\begin{bmatrix}
\operatorname I &
-\operatorname I \\
\operatorname I & 
\operatorname I
\end{bmatrix}^{-1}
 = \frac 1 2 
\begin{bmatrix}
\operatorname I &
-\operatorname I \\
\operatorname I & 
\operatorname I
\end{bmatrix}^\top,
\end{align*}
and so we can apply all the procedure explained before in order to reinterpret the matrix orthogonality in the diagonal matrix setting.

\subsection{Gauss--Borel interpretation of the biorthogonality}

\subsubsection{Second kind functions}

We define the \emph{sequence of second kind matrix functions} by
\begin{align} \label{eq:secondkind}
Q^{\mathsf L}_n (z) &
 \coloneqq
 \int_\gamma \frac{P^{\mathsf L}_n (t)}{t-z} { \omega (t)} \frac{\operatorname d t} { 2 \pi \operatorname i} , 
 \\ 
\label{eq:right_secondkind}
{Q}_n^{\mathsf R} (z) &
 \coloneqq \int_\gamma \omega (t) \frac{P^{\mathsf R}_{n} (t)}{t-z} \frac{\operatorname d t }{ 2 \pi \operatorname i},
 && n \in \mathbb N .
\end{align} 
From the orthogonality conditions~\eqref{eq:ortogonalidadL} and~\eqref{eq:ortogonalidadR} we have, for all $n \in \mathbb N $, the fol\-lowing 
asymptotic expansion near infinity for the sequence of functions of the second~kind 
\begin{align*} 
Q^{\mathsf L}_n (z) 
& = - C_n^{-1} \big( \operatorname I z^{-n-1} + q_{\mathsf L,n}^1 z^{-n-2} + \cdots \big),
 \\
Q^{\mathsf R}_n (z) 
& = - \big( \operatorname I z^{-n-1} + q_{\mathsf R,n}^1 z^{-n-2} + \cdots \big) \, C_n^{-1} . 
\end{align*}
From now on we assume that the weights $ W^{(j,k)} $, $j,k \in \big\{ 1, \ldots , N \big\} $ are H\"older continuous.
Hence using the Plemelj's formula, cf.~\cite{gakhov}, 
applied to~\eqref{eq:secondkind} and~\eqref{eq:right_secondkind}, the following fundamental jump identities hold
\begin{align*}
\big( Q^{\mathsf L}_n (z) \big)_+ - \big( Q_n^{\mathsf L} (z) \big)_- &= {P}^{\mathsf L}_n (z) \omega (z), 
 \\
\big( Q^{\mathsf R}_n (z) \big)_+ - \big( Q^{\mathsf R}_n (z) \big)_- &= \omega (z){P}^{\mathsf R}_n (z) ,
 && n \in \mathbb N ,
\end{align*}
$z\in\gamma$, where, $ \big( f(z) \big)_{\pm} = \lim\limits_{\epsilon \to 0^{\pm}} f(z + i \epsilon ) $; 
here $\pm$ indicates the positive/negative region according to the orientation of the curve $\gamma$.

Now, multiplying equation~\eqref{eq:ttrr} on the right by $ \omega$ and integrating we get, using the definition~\eqref{eq:secondkind} of $\big\{ Q^{\mathsf L}_n \big\}_{n\in \mathbb N }$, that
\begin{align*}
\int_\gamma \frac{t \, {P}^{\mathsf L}_n (t)} {t-z} { \omega (t) } \frac{\operatorname d t} {2\pi \operatorname i} = Q^{\mathsf L}_{n+1} (z) + \beta^{\mathsf L}_n Q^{\mathsf L}_{n} (z) + C_n^{-1} C_{n-1} Q^{\mathsf L}_{n-1} (z).
\end{align*}
As $\frac{t}{t-z} = 1 + \frac{z}{t-z} $, from the orthogonality conditons~\eqref{eq:ortogonalidadL} we conclude that
\begin{align}
\label{eq:ttrrql}
z Q^{\mathsf L}_n (z) & = Q^{\mathsf L}_{n+1} (z) 
+ \beta^{\mathsf L}_n Q^{\mathsf L}_{n} (z) + C_n^{-1} C_{n-1} Q^{\mathsf L}_{n-1} (z) , & n \in \mathbb N ,
\end{align}
as well as, from~\eqref{eq:rightttrr}
\begin{align}
\label{eq:ttrrqlr}
z Q^{\mathsf R}_n (z) & = Q^{\mathsf R}_{n+1} (z) + Q^{\mathsf R}_{n} (z) \, \beta^{\mathsf R}_n + Q^{\mathsf R}_{n-1} (z) \, C_{n-1} C_n^{-1} , & n \in \mathbb N ,
\end{align}
with initial conditions 
\begin{align*} 
 & Q^{\mathsf L}_{-1} (z) = Q^{\mathsf R}_{-1} (z) = - C_{-1}^{-1} , \\
 & Q^{\mathsf L}_0 (z)
=Q^{\mathsf R}_0(z)
=S_\omega (z) \coloneqq \int_{\gamma}\frac{ \omega(t)}{t-z}\frac{\operatorname d t}{2\pi \operatorname i} , 
\end{align*}
where $S_\omega (z)$ is the Stieltjes--Markov like transformation of the weight matrix~$ \omega$.


\begin{thm}
\label{pro:biorthogonalitystieltjes}
Let $a$ and $b$ be the end points of $\gamma$, respectively.
Let $C$ be a circle, negatively oriented (clockwise), such that $a$ and $b$ are in the interior of~$C$.
Then the Stieltjes--Markov like transformation $ S_\omega $ is a complex measure of biorthogonality for $\big\{P_n^\mathsf{L}\big\}_{n\in\mathbb N }$ and $\big\{P_n^\mathsf{R}\big\}_{n\in\mathbb N }$ over $C$, i.e.
\begin{align}
\label{eq:ortogonalidadeS}
\int_C P_n^\mathsf{L} (z)S_\omega(z)P_m^\mathsf{R} (z)\, \frac{\operatorname d z}{2\pi \operatorname i}
 =
C^{-1}_n \, \delta_{n,m} , && n , m \in \mathbb N ,
\end{align}for some invertible matrices, $C_n$, $n \in \mathbb N$.
\end{thm}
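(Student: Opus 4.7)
The plan is to insert the integral representation of $S_\omega$ into the left-hand side of \eqref{eq:ortogonalidadeS}, swap the order of integration by Fubini, and reduce the inner $z$-integral by the Cauchy integral formula. The biorthogonality \eqref{eq:biorthogonality} on $\gamma$ will then finish the job.

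Explicitly, I would rewrite
\begin{align*}
\int_C P_n^{\mathsf L}(z)\,S_\omega(z)\,P_m^{\mathsf R}(z)\,\frac{\operatorname d z}{2\pi\operatorname i}
=\int_C\!\int_\gamma \frac{P_n^{\mathsf L}(z)\,\omega(t)\,P_m^{\mathsf R}(z)}{t-z}\,\frac{\operatorname d t}{2\pi\operatorname i}\,\frac{\operatorname d z}{2\pi\operatorname i}.
\end{align*}
Since the entries $\omega^{(j,k)}$ are H\"older continuous on the compact arc $\gamma$ and $C$ lies at positive distance from $\gamma$ (which is contained in the interior of $C$), the integrand is jointly continuous on the compact product $C\times\gamma$, so Fubini applies entrywise and the two integrations may be interchanged.

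For each fixed $t\in\gamma$ the map $z\mapsto P_n^{\mathsf L}(z)\,\omega(t)\,P_m^{\mathsf R}(z)$ is a matrix polynomial in $z$, with $\omega(t)$ acting merely as a constant-in-$z$ factor sandwiched between the two polynomials, and $t$ lies in the interior of $C$. Applying the Cauchy integral formula and noting that the clockwise orientation of $C$ exactly cancels the sign arising from writing the denominator as $t-z$ rather than $z-t$, I obtain
\begin{align*}
\int_C \frac{P_n^{\mathsf L}(z)\,\omega(t)\,P_m^{\mathsf R}(z)}{t-z}\,\frac{\operatorname d z}{2\pi\operatorname i}
=P_n^{\mathsf L}(t)\,\omega(t)\,P_m^{\mathsf R}(t).
\end{align*}
Matrix non-commutativity causes no trouble because $\omega(t)$ remains in its original slot throughout the computation.

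Substituting this back and invoking the biorthogonality \eqref{eq:biorthogonality} yields
\begin{align*}
\int_C P_n^{\mathsf L}(z)\,S_\omega(z)\,P_m^{\mathsf R}(z)\,\frac{\operatorname d z}{2\pi\operatorname i}
=\int_\gamma P_n^{\mathsf L}(t)\,\omega(t)\,P_m^{\mathsf R}(t)\,\frac{\operatorname d t}{2\pi\operatorname i}
=\delta_{n,m}\,C_n^{-1},
\end{align*}
which is exactly \eqref{eq:ortogonalidadeS}. The only bookkeeping to watch is the orientation sign in the Cauchy formula; past that, the argument reduces entirely to the biorthogonality already established on $\gamma$, so I foresee no real obstacle.
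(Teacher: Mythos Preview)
Your proof is correct and follows essentially the same route as the paper's own argument: substitute the defining integral for $S_\omega$, interchange the order of integration by Fubini, evaluate the inner $z$-integral over $C$ via the Cauchy integral formula, and finish with the biorthogonality~\eqref{eq:biorthogonality}. If anything, your version is slightly more explicit in justifying Fubini and tracking the orientation sign.
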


\begin{proof}
We have the following identities
\begin{align*}
&
\int_C P_n^\mathsf{L} (z)S_\omega(z)P_m^\mathsf{R} (z)\, \frac{\operatorname d z}{2\pi \operatorname i} =\int_C P_n^\mathsf{L} (z) \left(\int_\gamma \frac{\omega (t)}{t-z}\, \frac{\operatorname d t}{2\pi \operatorname i}\right) P_m^\mathsf{R} (z)\, \frac{\operatorname d z}{2\pi \operatorname i} \\
&\phantom{olaola}= \int_\gamma \left( \int_C \frac{P_n^\mathsf{L} (z)\omega (t)P_m^\mathsf{R} (z)}{t-z}\, \frac{\operatorname d z}{2\pi \operatorname i} \right)\, \frac{\operatorname d t}{2\pi \operatorname i} \hspace{1.25cm} \text{(Fubini's theorem)}\\
&\phantom{olaola} = \int_\gamma P_n^\mathsf{L} (t)\omega (t) P_m^\mathsf{R} (t)\, \frac{\operatorname d t}{2\pi \operatorname i}\hspace{1.625cm} \text{(Cauchy's integral theorem)} 
\end{align*}
and so from~\eqref{eq:biorthogonality} we get the desired result.
\end{proof}

\subsubsection{Gauss--Borel factorization}

Let us give a nice interpretation of this biorthogonality.
First of all, we can see that the Stieltjes--Markov like transformation of the weight matrix~$ \omega$ is a generating function of the moments of $\omega$. In fact,
\begin{align*}
S_ \omega(z) = \int_{\gamma}\frac{ \omega(t)}{t-z}\frac{\operatorname d t}{2\pi \operatorname i}
 = \frac{1}{2\pi \operatorname i} \sum_{n=0}^\infty \frac{ \int_{\gamma} t^n \omega (t) \, \operatorname d t }{z^{n+1}}
 = \sum_{n=0}^\infty \frac{\omega_{n}}{z^{n+1}} , 
\end{align*}
for $|z| > r \coloneqq \max \big\{ |t|, t \in \gamma \big\} $.
Hence, $S_\omega$ is an analytic function on compact sets of $\mathbb C \setminus \big\{ z \in \mathbb C : |z| < r \big\}$.

We write down the biorthogonality conditions~\eqref{eq:ortogonalidadeS} that we have just derived in Theorem~\ref{pro:biorthogonalitystieltjes},
\begin{align*}
\int_C P_n^\mathsf{L} (z)S_\omega(z)P_m^\mathsf{R} (z)\, \frac{\operatorname d z}{2\pi \operatorname i}
 =
 \sum_{n=0}^\infty \int_C P_n^\mathsf{L} (z)\frac{\omega_{n}}{z^{n+1}} P_m^\mathsf{R} (z)\, \frac{\operatorname d z}{2\pi \operatorname i} 
\end{align*}
and by the Cauchy integral formula we get
\begin{align}
\label{eq:bior}
C_n^{-1} \delta_{n,m} = \int_C P_n^\mathsf{L} (z)S_\omega(z)P_m^\mathsf{R} (z)\, \frac{\operatorname d z}{2\pi \operatorname i}
 =
 \sum_{k=0}^\infty \frac{\Big( P_n^\mathsf{L} (z) \omega_{k} P_m^\mathsf{R} (z) \Big)^{(k)}\Big|_{z \to 0}}{k!}  .
\end{align}
Now, from the Leibniz rule for the derivatives, we know that
\begin{align*}
\frac1 {k!} \big( P_n^\mathsf{L} (z) \omega_{k} P_m^\mathsf{R} (z) \big)^{(k)}\Big|_{z \to 0} 
 =\sum_{j=0}^k \frac{ \big( P_n^\mathsf{L}\big)^{(j)} (0)}{j!} \omega_k \frac{ \big( P_m^\mathsf{R}\big)^{(k-j)} (0)}{(k-j)!} .
\end{align*}
With this identity we can reinterpret~\eqref{eq:bior} in matrix notation
\begin{align*}
\mathcal P^\mathsf L \, \pmb{\mathsf U} \, \mathcal P^\mathsf R = \operatorname{diag} \big\{ C_0^{-1} , C_1^{-1} , \ldots \big\}
\end{align*}
where
\begin{align*}
\pmb{\mathsf U} & = \begin{bmatrix}
\omega_0 & \cdots & \omega_n & \cdots \\
 \vdots & \ddots & \vdots \\
\omega_n & \cdots & \omega_{2n} & \cdots \\
\vdots & & \vdots & \ddots 
 \end{bmatrix} & \text{moment matrix}
 \\[.1cm]
\mathcal P^\mathsf L 
 & =
 \left.\begin{bmatrix}
 P_0^\mathsf L 
  & \\[.1cm]
 P_1^\mathsf L 
  & \big( P_1^\mathsf L\big)^\prime 
   & \\[.1cm]
 P_2^\mathsf L 
  & \big( P_2^\mathsf L\big)^\prime 
   & \frac 1{2!}\big( P_2^\mathsf L\big)^{\prime\prime} 
    \\[.1cm]
 \vdots & \vdots & \vdots & \ddots \\[.1cm]
 P_n^\mathsf L 
  & \big( P_n^\mathsf L\big)^\prime 
  & \frac 1{2!}\big( P_2^\mathsf L\big)^{\prime\prime} 
  & \cdots & & \frac 1{n!}\big( P_n^\mathsf L\big)^{(n)} 
  \\[.1cm]
 \vdots & & & & & \ddots
 \end{bmatrix} \right|_{z \to 0}
 &
\text{$
 P_n^\mathsf L 
 $
Taylor coefficients}
 \\[.1cm]
\mathcal P^\mathsf R 
 & =
 \left.\begin{bmatrix}
 P_0^\mathsf R 
 & \\[.1cm]
 P_1^\mathsf R 
 & \big( P_1^\mathsf R\big)^\prime 
 & \\[.1cm]
 P_2^\mathsf R 
  & \big( P_2^\mathsf R\big)^\prime 
  & \frac 1{2!}\big( P_2^\mathsf R\big)^{\prime\prime} 
  \\[.1cm]
 \vdots & \vdots & \vdots & \ddots \\[.1cm]
 P_n^\mathsf R 
  & \big( P_n^\mathsf R\big)^\prime 
  & \frac 1{2!}\big( P_2^\mathsf R\big)^{\prime\prime} 
  & \cdots & & \frac 1{n!}\big( P_n^\mathsf R\big)^{(n)} 
  \\[.1cm]
 \vdots & \vdots & \vdots & & & \ddots
 \end{bmatrix} \right|_{z \to 0}^\top
 &
\text{$ P_n^\mathsf R $ Taylor coefficients}
\end{align*}
As a conclusion:
We get the Gauss--Borel factorization of the moment matrix
\begin{align}
\label{eq:gaussborel}
\pmb{\mathsf U} =
\big( \mathcal P^\mathsf L\big)^{-1} \, \operatorname{diag} \big\{ C_0^{-1} , C_1^{-1} , \ldots \big\} \, \big( \mathcal P^\mathsf R \big)^{-1} .
\end{align}
We can see from~\eqref{eq:gaussborel} that the orthogonality relays on the Gauss--Borel factorization of the moment matrix. 

Remember that the necessary and sufficient conditions in order to assure that this representation takes place are exactly the ones we assume at the beginning in order to define the sequence of monic polynomials (left and right ones), i.e.~\eqref{eq:regular} takes place.

\subsection{Riemann--Hilbert problem} \label{subsec:14}

It can be seen~that
\begin{align*}
 {P}_n^{\mathsf L} (z) \, Q_0^{\mathsf L} (z) = \int_\gamma \big( {P}^{\mathsf L}_n (z) - {P}^{\mathsf L}_n (x) \big) \, 
 \frac{ \omega (x) }{z-x} \, \frac{\operatorname{d} x}{2\pi \operatorname i} +
 \int_\gamma {P}^{\mathsf L}_n (x) \, \frac{ \omega (x) }{z-x} \, \frac{\operatorname{d} x}{2\pi \operatorname i}
\end{align*}
and so, ${P}_{n-1}^{\mathsf L,(1)}$, defined by
\begin{align}
\label{eq:hpleft}
 {P}^{\mathsf L}_n (z) \, Q^{\mathsf L}_0 (z) - Q^{\mathsf L}_n (z) = 
 {P}_{n-1}^{\mathsf L,(1)} (z)
\end{align}
is for each $n \in \mathbb Z_+ \coloneqq \big\{ 1,2, \ldots \big\}$, a polynomial of degree at most $\displaystyle n-1 $, called the \emph{first kind associated polynomial} with respect to $\displaystyle \big\{ {P}_n^{\mathsf L} \, \big\}_{n \in \mathbb N} $ and $\displaystyle \omega $.
In fact,
\begin{align*}
{P}_{n-1}^{\mathsf L,(1)} (z)
  \coloneqq
\int_\gamma \big( {P}^{\mathsf L}_n (z) - {P}^{\mathsf L}_n (x) \big) \, \frac{ \omega (x) }{z-x} \,\frac{\operatorname{d} x}{2\pi \operatorname i} , && n \in \mathbb Z_+ . 
\end{align*}
We can matricially summarize the identities~\eqref{eq:ttrr}, \eqref{eq:ttrrql}, as
\begin{align*}
\left[
\begin{matrix}
 {P}^{\mathsf L}_{n+1} (z) & Q^{\mathsf L}_{n+1} (z) \\[.1cm]
C_{n} \, {P}_n^{\mathsf L} (z) & C_{n} \, Q_n^{\mathsf L} (z)
\end{matrix}
\right]
=
\left[
\begin{matrix}
z \operatorname{I} - \beta^{\mathsf L}_n & -C_{n}^{-1} \\[.1cm]
C_{n} & \pmb 0
\end{matrix}
\right]
\, \left[
\begin{matrix}
 {P}_{n}^{\mathsf L} (z) & Q_{n}^{\mathsf L} (z) \\[.1cm]
C_{n-1} \, {P}_{n-1}^{\mathsf L} (z) & C_{n-1} \, Q_{n-1}^{\mathsf L} (z)
\end{matrix}
\right] \, ;
\end{align*} 
and by~\eqref{eq:hpleft}
we also have that
\begin{align*}
\left[
\begin{matrix}
 {P}_{n}^{\mathsf L,(1)} (z) \\[.1cm]
C_{n} \, {P}_{n-1}^{\mathsf L,(1)} (z)
\end{matrix}
\right]
=
\left[
\begin{matrix}
z \operatorname{I} - \beta^{\mathsf L}_n & -C_{n}^{-1} \\[.1cm]
C_{n} & \pmb 0
\end{matrix}
\right]
\, \left[
\begin{matrix}
 {P}_{n-1}^{\mathsf L,(1)} (z) \\[.1cm]
C_{n-1} \, {P}_{n-2}^{\mathsf L,(1)} (z)
\end{matrix}
\right]
 . 
\end{align*}
Taking
\begin{align*}
 Y_n^{\mathsf L} (z) \coloneqq
\begin{bmatrix}
 {P}_{n}^{\mathsf L} (z) & Q_{n}^{\mathsf L} (z) \\[.1cm]
C_{n-1} \, {P}_{n-1}^{\mathsf L} (z) & C_{n-1} \, Q_{n-1}^{\mathsf L} (z)
\end{bmatrix}
 && \mbox{ and} &&
 T_n^{\mathsf L} \coloneqq
\begin{bmatrix}
z \operatorname{I} - \beta^{\mathsf L}_n & - C_{n}^{-1} \\[.1cm]
C_{n} & \pmb 0
\end{bmatrix} , 
\end{align*}
for all $ n \in \mathbb N$, we get
\begin{align}
\det \, Y_n^{\mathsf L} (z) = \det \, Y_{0}^{\mathsf L} (z) = 1 && \text{as 
}
 &&
\det \, T_n^{\mathsf L} = 1 , && n \in \mathbb N .
\end{align}
In the same way, we get from~\eqref{eq:rightttrr} and~\eqref{eq:ttrrqlr} that
\begin{align*}
Y_{n+1}^\mathsf R (z) = Y_{n}^\mathsf R (z) \, T_{n}^\mathsf R (z) , && n \in \mathbb N,
\end{align*}
where
\begin{align*}
Y_{n}^\mathsf R (z)
 \coloneqq \begin{bmatrix}
P^{\mathsf R}_{n} (z) & - P^{\mathsf R}_{n-1} (z) C_n \\[.1cm]
{Q}^{\mathsf R}_{n} (z) & - {Q}^{\mathsf R}_{n-1} (z) C_n
\end{bmatrix}
 && \text{and} &&
T_n^\mathsf R (z)
\coloneqq
\begin{bmatrix}
z \operatorname I - \beta_n^{\mathsf R} & - C_{n} \\[.1cm]
C_{n}^{-1} & \pmb{0}
\end{bmatrix}
\end{align*}
where $\beta_n^\mathsf R$ is defined in~\eqref{eq:rightcoeff}.
Here we also have
\begin{align}
\det \, Y_n^{\mathsf R} (z) = \det \, Y_{0}^{\mathsf R} (z) = 1
&&
 \text{as 
}
 &&
\det \, T_n^{\mathsf R} = 1 , && n \in \mathbb N .
\end{align}
As a conclusion:
We can establish that the matrix function $\displaystyle Y_n^{\mathsf L} $ (respectively $\displaystyle Y_n^{\mathsf R} $) is, for each $n \in \mathbb N$, the unique solution of the Riemann--Hilbert problem; which consists in determining a $2N \times 2 N$ matrix complex function
 $\displaystyle G_n^{\mathsf L} 
 $ (respectively $\displaystyle G_n^{\mathsf R}$) such that:
 \begin{description}
\item[(RH1)]
 $\displaystyle G_n^{\mathsf L} $ (respectively $\displaystyle G_n^{\mathsf R}$) is analytic in $\displaystyle \mathbb C \setminus \mathbb \gamma$;
\item[(RH2)] 
has the following asymptotic behavior near infinity,
\begin{align*}
 && G_n^{\mathsf L} (z) & = 
\big( \operatorname{I} + \operatorname{O} (z^{-1}) \big)
\begin{bmatrix}
\operatorname I \, z^n & \pmb 0 \\ \pmb 0 & \operatorname{I} \, z^{-n} 
\end{bmatrix}
 \\
\text{(respectively,} &&
 G_n^{\mathsf R} (z) & = 
 \begin{bmatrix}
\operatorname I z^n & \pmb{0} \\ 
 \pmb{0} & \operatorname I z^{-n} 
 \end{bmatrix}\big( \operatorname I + \operatorname{O} (z^{-1}) \big) \text{);}
\end{align*}
\item[(RH3)]
satisfies the jump condition
 $\displaystyle \big( G_n^{\mathsf L} (z) \big)_+ = \big( G_n^{\mathsf L} (z) \big)_- \,
\left[
\begin{matrix}
\operatorname I & \omega (x) \\ \pmb 0 & \operatorname{I}
\end{matrix}
\right] $ (respectively,
$\displaystyle \big( G^{\mathsf R}_n (z) \big)_+ =
 \begin{bmatrix}
 \operatorname I & \pmb{0} \\ 
 \omega (x) & \operatorname I 
 \end{bmatrix} \big( G^{\mathsf R}_n (z) \big)_- $), $\displaystyle x \in \gamma$.
\end{description}
Now, we will see that these two Riemann--Hilbert problems are related by similarity.

\begin{thm}
\label{teo:inversa}
Let $ Y_n^{\mathsf L} $ and $ Y_n^{\mathsf R} $ be, for each $n \in \mathbb N$, the unique solutions of the Rie\-mann--Hilbert problems just defined; then
\begin{align}
\label{eq:conexao}
\Big( Y_n^{\mathsf L}(z) \Big)^{-1} = 
\begin{bmatrix}
 \pmb 0 & \operatorname I \\ - \operatorname I & \pmb 0
\end{bmatrix}
Y_n^{\mathsf R} (z) 
\begin{bmatrix}
\pmb 0 & - \operatorname I \\ \operatorname I & \pmb 0
\end{bmatrix} , && n \in \mathbb N .
\end{align}
\end{thm}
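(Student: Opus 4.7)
The plan is to show the stated identity by establishing that the product
\[
F_n(z) \coloneqq Y_n^{\mathsf L}(z)\, J\, Y_n^{\mathsf R}(z)\, J^{-1}, \qquad
J \coloneqq \begin{bmatrix} \pmb 0 & \operatorname I \\ -\operatorname I & \pmb 0 \end{bmatrix},\qquad
J^{-1} = \begin{bmatrix} \pmb 0 & -\operatorname I \\ \operatorname I & \pmb 0 \end{bmatrix},
\]
equals the identity matrix, which is equivalent to \eqref{eq:conexao} after multiplying on the right by $J Y_n^{\mathsf R} J^{-1}$ and rearranging. The strategy is to verify that $F_n$ satisfies a trivial Riemann--Hilbert problem and invoke Liouville's theorem, exactly as one does for scalar or $2\times 2$ RH problems in the Deift--Zhou framework.

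First I would check that $F_n$ has \emph{no jump} across $\gamma$. By (RH3), the boundary values satisfy
\[
\big(Y_n^{\mathsf L}\big)_+ = \big(Y_n^{\mathsf L}\big)_- \begin{bmatrix} \operatorname I & \omega \\ \pmb 0 & \operatorname I \end{bmatrix},\qquad
\big(Y_n^{\mathsf R}\big)_+ = \begin{bmatrix} \operatorname I & \pmb 0 \\ \omega & \operatorname I \end{bmatrix} \big(Y_n^{\mathsf R}\big)_-.
\]
A direct computation shows $J \begin{bmatrix} \operatorname I & \pmb 0 \\ \omega & \operatorname I \end{bmatrix} J^{-1} = \begin{bmatrix} \operatorname I & -\omega \\ \pmb 0 & \operatorname I \end{bmatrix}$, which is precisely the inverse of the jump matrix for $Y_n^{\mathsf L}$. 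Hence in $(F_n)_+$ the two jump matrices cancel and $(F_n)_+ = (F_n)_-$ on $\gamma$, so $F_n$ extends to an entire function.

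Next I would analyze the behavior at infinity. Writing $D_n(z) \coloneqq \mathrm{diag}(\operatorname I z^n, \operatorname I z^{-n})$, the asymptotic conditions (RH2) give $Y_n^{\mathsf L}(z) = (\operatorname I + \operatorname O(z^{-1})) D_n(z)$ and $Y_n^{\mathsf R}(z) = D_n(z)(\operatorname I + \operatorname O(z^{-1}))$. The crucial algebraic fact is $D_n(z)\, J\, D_n(z) = J$ (easily checked, as the two diagonal pieces of $z^{\pm n}$ end up multiplying each off-diagonal block of $J$), and therefore
\[
F_n(z) = \big(\operatorname I + \operatorname O(z^{-1})\big)\, J\, \big(\operatorname I + \operatorname O(z^{-1})\big)\, J^{-1} = \operatorname I + \operatorname O(z^{-1}),\quad z\to\infty.
\]

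Combining the two observations, $F_n$ is an entire $2N\times 2N$ matrix function bounded at infinity with limit $\operatorname I$; by Liouville's theorem $F_n \equiv \operatorname I$, yielding $Y_n^{\mathsf L}(z)^{-1} = J\, Y_n^{\mathsf R}(z)\, J^{-1}$. The only subtle point is the cancellation of the jump matrices, and this hinges on the specific form of $J$; everything else is bookkeeping with the leading asymptotics, so I do not anticipate a substantive obstacle.
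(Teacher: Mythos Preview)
Your argument is correct: the jump cancellation is exactly as you compute, the identity $D_n(z)\,J\,D_n(z)=J$ handles the normalization at infinity, and Liouville finishes the job. This is the standard Riemann--Hilbert uniqueness trick, and it works cleanly here.

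The paper, however, takes a quite different route. Instead of a Liouville argument, it proves four Christoffel--Darboux type identities for the mixed products $P^{\mathsf R}P^{\mathsf L}$, $Q^{\mathsf R}Q^{\mathsf L}$, $Q^{\mathsf R}P^{\mathsf L}$, $P^{\mathsf R}Q^{\mathsf L}$ (obtained from the three term recurrences for the left and right families), then specializes them to $z=t$ to obtain the four block entries of the identity
\[
\begin{bmatrix}
-\,Q_{n-1}^{\mathsf R}C_{n-1} & -\,Q_n^{\mathsf R}\\[2pt]
P_{n-1}^{\mathsf R}C_{n-1} & P_n^{\mathsf R}
\end{bmatrix}
Y_n^{\mathsf L}=\operatorname I,
\]
and finally observes that the left factor is exactly $J\,Y_n^{\mathsf R}\,J^{-1}$. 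So the paper's proof is an explicit algebraic verification built on the Christoffel--Darboux machinery, while yours is purely analytic and uses only the Riemann--Hilbert characterization. Your approach is shorter and makes clear that \eqref{eq:conexao} is a formal consequence of the RH data alone; the paper's approach has the side benefit of displaying the Christoffel--Darboux identities, which are of independent use. One small caveat: if $\gamma$ has endpoints (as in the Laguerre or Jacobi cases treated later), you should remark that $F_n$ stays bounded there so that the possible isolated singularities are removable; this is routine but worth a sentence.
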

This is an easy consequence of the Christoffel--Darboux identities, valid for all~$ n \in \mathbb N $,
\begin{align*}
(z-t) \sum_{k=0}^n P_k^{\mathsf R}(t) C_k P_k^{\mathsf L} (z) 
 & = P_n^{\mathsf R}(t) C_n P_{n+1}^{\mathsf L} (z)
 - P_{n+1}^{\mathsf R}(t) C_n P_{n}^{\mathsf L} (z) , \\
 (z-t) \sum_{k=0}^n Q_k^{\mathsf R}(t) C_k Q_k^{\mathsf L} (z) 
 & = Q_n^{\mathsf R}(t) C_n Q_{n+1}^{\mathsf L} (z)
 - Q_{n+1}^{\mathsf R}(t) C_n Q_{n}^{\mathsf L} (z) \\
 & \hspace{4.05cm} + S_\omega (z) - S_\omega (t), \\
 (z-t) \sum_{k=0}^n Q_k^{\mathsf R}(t) C_k P_k^{\mathsf L} (z) 
 & = Q_n^{\mathsf R}(t) C_n P_{n+1}^{\mathsf L} (z)
 - Q_{n+1}^{\mathsf R}(t) C_n P_{n}^{\mathsf L} (z) + \operatorname I
 \\
(z-t) \sum_{k=0}^n P_k^{\mathsf R}(t) C_k Q_k^{\mathsf L} (z) 
 & = P_n^{\mathsf R}(t) C_n Q_{n+1}^{\mathsf L} (z)
 - P_{n+1}^{\mathsf R}(t) C_n Q_{n}^{\mathsf L} (z) - \operatorname I .
\end{align*}
Taking $z=t$ we arrive to the identities
\begin{align*}
 P_n^{\mathsf R}(t) C_n P_{n+1}^{\mathsf L} (z) 
 - P_{n+1}^{\mathsf R}(t) C_n P_{n}^{\mathsf L} (z) & = \pmb 0 , \\
 Q_n^{\mathsf R}(t) C_n Q_{n+1}^{\mathsf L} (z) ,
 - Q_{n+1}^{\mathsf R}(t) C_n Q_{n}^{\mathsf L} (z) &= \pmb 0 , \\
Q_{n+1}^{\mathsf R}(t) C_n P_{n}^{\mathsf L} (z) - Q_n^{\mathsf R}(t) C_n P_{n+1}^{\mathsf L} (z)
 & = \operatorname I , \\
 P_n^{\mathsf R}(t) C_n Q_{n+1}^{\mathsf L} (z)
 - P_{n+1}^{\mathsf R}(t) C_n Q_{n}^{\mathsf L} (z) & = \operatorname I .
\end{align*}
All of this becomes 
\begin{align*}
\begin{bmatrix}
- Q_{n-1}^{\mathsf R} (z) C_{n-1} & - Q_{n}^{\mathsf R} (z) \\
 P_{n-1}^{\mathsf R} (z) C_{n-1} & P_{n}^{\mathsf R} (z) 
\end{bmatrix}
Y_n^{\mathsf L} (z) & = \operatorname I , & n \in \mathbb N ,
\end{align*}
and as
\begin{align*}
\begin{bmatrix}
- Q_{n-1}^{\mathsf R} (z) C_{n-1} & - Q_{n}^{\mathsf R} (z) \\
 P_{n-1}^{\mathsf R} (z) C_{n-1} & P_{n}^{\mathsf R} (z) 
\end{bmatrix}
 & = 
\begin{bmatrix}
\pmb 0 & \operatorname I \\ -\operatorname I & \pmb 0
\end{bmatrix}
Y_n^{\mathsf R} (z) 
\begin{bmatrix}
\pmb 0 & -\operatorname I \\ \operatorname I & \pmb 0
\end{bmatrix} , & n \in \mathbb N ,
\end{align*}
we get the desired result.

\section{Fundamental matrices 
from Riemann--Hilbert problems} \label{sec:2}

Here we consider weight matrices, $\omega$, satisfying a matrix Pearson type equation
\begin{align*}
\phi (z) \omega^\prime (z) = h^{\mathsf L} (z) \omega (z) + \omega (z) h^{\mathsf R} (z) ,
\end{align*}
where $\phi$ is a scalar polynomial of degree at most $2$, and $h^{\mathsf L} $,~$ h^{\mathsf R} $
are entire matrix functions.
If we take a weight function $\omega^{\mathsf L}$ such that
\begin{align}
\label{eq:pearsonjacobileft}
\phi (z) \big( \omega^{\mathsf L} \big)^\prime (z) = h^{\mathsf L} (z) \omega^{\mathsf L} (z) ,
\end{align}
then there exists a matrix function $\omega^\mathsf R (z)$ such that $\omega (z) = \omega^{\mathsf L}(z) \omega^{\mathsf R} (z)$ with
\begin{align}
\label{eq:pearsonjacobiright}
\phi (z) \big( \omega^{\mathsf R} \big)^\prime (z) = \omega^{\mathsf R} (z) h^{\mathsf R} (z) .
\end{align}
The reciprocal is also true.

For each factorization~$ \omega = \omega^{\mathsf L} \omega^{\mathsf R}$, we introduce the \emph{constant jump fundamental matrices } which will be instrumental in what follows
\begin{align}
\label{eq:zn}
 Z_n^{\mathsf L}(z) 
& \coloneqq Y^{\mathsf L}_n (z) 
\begin{bmatrix} 
 \omega^{\mathsf L} (z) & \pmb 0 \\ 
\pmb 0 & \big( \omega^{\mathsf R} (z)\big)^{-1} 
\end{bmatrix}
 , & \\
 \label{eq:zetan}
{Z}^{\mathsf R}_n (z)
& \coloneqq
\begin{bmatrix} 
 \omega^{\mathsf R} (z) & \pmb 0 \\ 
\pmb 0 & \big( \omega^{\mathsf L} (z)\big)^{-1} 
\end{bmatrix}
{Y}^{\mathsf R}_n (z) , & n \in \mathbb N
 .
\end{align}
Taking inverse on~\eqref{eq:zn} and applying~\eqref{eq:conexao} we see that 
$Z_n^{\mathsf R}$ given in~\eqref{eq:zetan} admits the representation
\begin{align}
\label{eq:znright}
Z_n^{\mathsf R} (z)
 & =
\begin{bmatrix}
\pmb 0 & - \operatorname I \\ \operatorname I & \pmb 0
\end{bmatrix}
\big(Z_n^{\mathsf L} (z)\big)^{-1}
\begin{bmatrix}
\pmb 0 & \operatorname I \\ - \operatorname I & \pmb 0
\end{bmatrix} , & n \in \mathbb N.
\end{align}
In parallel to the matrices $Z^{\mathsf L}_n(z)$ and $Z^{\mathsf R}_n(z)$, we introduce what we call \emph{structure matrices} given in terms of the \emph{left}, respectively \emph{right}, logarithmic derivatives by,
\begin{align}
\label{eq:Mn}
M^{\mathsf L}_n (z) & 
 \coloneqq \big( Z^{\mathsf L}_n \big)^{\prime} (z) \big(Z^{\mathsf L}_{n} (z)\big)^{-1},&
{M}^{\mathsf R}_n (z) & 
 \coloneqq \big ({Z}^{\mathsf R}_{n} (z)\big)^{-1} \big(Z^{\mathsf R}_n\big)^{\prime} (z) .
\end{align}
It is not difficult to see that
\begin{align}
\label{MnLR}
{M}^{\mathsf R}_{n}(z) &
 = -
\left[ \begin{matrix}
\pmb 0 & - \operatorname I \\ \operatorname I & \pmb 0
 \end{matrix} \right]
M_n^{\mathsf L} (z)
\left[ \begin{matrix}
\pmb 0 & \operatorname I \\ - \operatorname I & \pmb 0
 \end{matrix} \right] , &
n \in \mathbb N ,
\end{align}
as well as, the following properties hold~(cf.~\cite{BAFMM}):
\begin{enumerate}

\item
The transfer matrices satisfy
 \begin{align*}
 T^{\mathsf L}_n (z)Z_n^{\mathsf L}(z) & = Z^{\mathsf L}_{n+1}(z) ,& {Z}^{\mathsf R}_n(z){T}^{\mathsf R}_n (z) &=
 {Z}^{\mathsf R}_{n+1}(z), & n \in \mathbb N .
 \end{align*}

\item 
The zero curvature formulas holds,
\begin{align*}
\left[ \begin{matrix}
\operatorname I & \pmb 0 \\
\pmb 0 & \pmb 0 
 \end{matrix} \right]
 & 
= M^{\mathsf L}_{n+1} (z) T^{\mathsf L}_n(z) - T^{\mathsf L}_n (z) M^{\mathsf L}_{n} (z)
 , && n \in \mathbb N , 
 \\
\left[ \begin{matrix}
\operatorname I & \pmb 0 \\
\pmb 0 & \pmb 0
 \end{matrix} \right]
 & =
T^{\mathsf R}_n (z) \, M^{\mathsf R}_{n+1} (z) 
- M^{\mathsf R}_{n} (z) T^{\mathsf R}_n (z) , && n \in \mathbb N.
\end{align*}
\end{enumerate}
We see from~\eqref{eq:conexao},~\eqref{eq:znright} and~\eqref{MnLR} that we only need to consider the left side of the objects~$Y_n$, $Z_n$ and $M_n$.

\subsection{Hermite case}

In this case we consider $\phi (z) = 1$.
We underline that for a given regular weight matrix $ \omega (z)$ we will have many possible factorization $ \omega (z) = \omega^{\mathsf L} (z) \omega^{\mathsf R} (z)$. Indeed, if we define an equivalence relation $\big( \omega^{\mathsf L}, \omega^{\mathsf R} \big) \sim \big( \widetilde \omega^{\mathsf L},\widetilde \omega^{\mathsf R} \big)$ if, and only if, $ \omega^{\mathsf L} \omega^{\mathsf R} = \widetilde \omega^{\mathsf L}\widetilde \omega^{\mathsf R}$, then each weight matrix $ \omega $ can be though as a class of equivalence, and can be described by the orbit 
\begin{align*}
\big\{ \big( \omega^{\mathsf L} \varphi,\varphi^{-1} \omega^{\mathsf R} \big), \ \varphi \ \text{ is a nonsingular matrix of entire functions} \big\}.
\end{align*}
The constant jump fundamental matrices~$Z^{\mathsf L}_n(z) $ and~$Z^\mathsf R_n(z)$ are, for each $n \in \mathbb N$, characterized by the following properties: 
\begin{enumerate}

\item[{\rm i)}]
They are holomorphic on $\mathbb C \setminus \gamma$.

\item[{\rm ii)}]
We have the following asymptotic behaviors
\begin{align*}
Z^\mathsf L_n(z) &=\big( \operatorname I + \operatorname{O} (z^{-1}) \big)
\begin{bmatrix}
z^n \omega^{\mathsf L} (z) & \pmb{0} \\ 
\pmb{0} & z^{-n} ( \omega^{\mathsf R} (z))^{-1} 
\end{bmatrix}, \\
Z^\mathsf R_n(z)&
=\begin{bmatrix}
z^n \omega^{\mathsf R} (z) & \pmb 0 \\ 
\pmb 0 & ( \omega^{\mathsf L} (z))^{-1} z^{-n}
\end{bmatrix}\big( \operatorname I + \operatorname{O} (z^{-1}) \big), &&
\text{for $ z \to \infty$.}
\end{align*}

\item[{\rm iii)}]
They present the following \emph{constant jump condition} on $\gamma$
\begin{align*} 
\big( Z^{\mathsf L}_n (z) \big)_+ &= \big( Z^{\mathsf L}_n (z) \big)_- 
\begin{bmatrix}
\operatorname I & \operatorname I \\ 
\pmb 0 & \operatorname I 
\end{bmatrix} , &
\big( {Z}^{\mathsf R}_n (z) \big)_+ &=
\begin{bmatrix} 
\operatorname I & \pmb{0} \\ 
\operatorname I & \operatorname I 
\end{bmatrix} 
\big( {Z}^{\mathsf R}_n (z) \big)_- ,
\end{align*}
for all $z\in\gamma$ in the support on the weight matrix.
\end{enumerate}
In~\cite{BAFMM} we have proved that the structure matrices $M^{\mathsf L}_n (z) $ and $ M^{\mathsf R}_n (z) $, just defined,~cf.\eqref{eq:Mn}, are, for each $n \in \mathbb N$, matrices of entire functions in the complex~plane.

\subsection{Laguerre case}

Here we consider, $\phi (z) = z$, and $\omega$ a regular Laguerre~type weight matrix, i.e.
$
\omega =
\left[
\begin{smallmatrix}
\omega^{(1,1)} & \cdots & \omega^{(1,N)} \\
\vdots & \ddots & \vdots \\
\omega^{(N,1)} & \cdots & \omega^{(N,N)} 
\end{smallmatrix}
\right]
$,
with
\begin{align*}
\omega^{(j,k)}(z) = \sum_{m \in I_{j,k}} A_m(z) z^{\alpha_{m}} \log^{p_{m}} z 
 ,
&&
 z \in(0,+\infty),
\end{align*} 
where
$I_{j,k}$ denotes a finite set of indexes, $\operatorname{Re} \left( \alpha_{m}\right) > -1$, $p_{m} \in \mathbb{N} $ and $A_m(z)$ is H\"older continuous and bounded.

In~\cite{duran2004} different examples of Laguerre weights for the matrix orthogonal polynomials on the real line are studied. 

In order to state a Riemann--Hilbert problem for the Laguerre type weights we must add to the one presented in Section~\ref{subsec:14}, i.e. to (RH1)--(RH3), the condition
\begin{description}
\item[(RH4)]
$
Y^{\mathsf L}_n (z) 
= \left[\begin{matrix}
\operatorname{O} (1) & s^{\mathsf L}_{1}(z) \\[.1cm]
\operatorname{O} (1) & s^{\mathsf L}_{2}(z) 
\end{matrix} \right]$
and $
Y^{\mathsf R}_n (z) 
= 
\left[\begin{matrix}
\operatorname{O} (1) & \operatorname{O} (1) \\[.1cm]
s^{\mathsf R}_1(z) & s^{\mathsf R}_2(z) 
\end{matrix} \right]$, 
as $ z \to 0$,
with
$
\displaystyle 
\lim_{z \to 0} z s^{\mathsf L}_j(z) = \pmb 0
$,
$\displaystyle \lim_{z \to 0} z s^{\mathsf R}_j (z) = \pmb 0$,
$j=1,2$ and the~$ \operatorname{O} $ conditions are understood entrywise.
\end{description}
The solutions of~\eqref{eq:pearsonjacobileft} and~\eqref{eq:pearsonjacobiright} are of type
\begin{align*}
\omega^{\mathsf L}(z) = H^{\mathsf L}(z) z^{A^{\mathsf L}} \omega_0^{\mathsf L},
 &&
\omega^{\mathsf R}(z) =\omega_0^{\mathsf R} z^{A^{\mathsf R}} H^{\mathsf R}(z) 
\end{align*}
where 
$
H^{\mathsf L}$, $H^{\mathsf R}$ are entire and nonsingular matrix function such that $H^{\mathsf L}(0) = H^{\mathsf R}(0) = \operatorname I$, and~$\omega_0^{\mathsf L}$, $\omega_0^{\mathsf R}$ are a constant nonsingular matrix.

The constant jump fundamental matrices, introduced in~\eqref{eq:zn} and~\eqref{eq:zetan}, i.e.~$Z^{\mathsf L}_n$ and~$Z^\mathsf R_n$ satisfy, for each $n \in \mathbb N$, the fol\-low\-ing properties: 

\begin{enumerate}

\item[i)]
They are holomorphic on $\mathbb{C} \setminus \gamma$.

\item[ii)]
Present the fol\-low\-ing constant jump condition on $\gamma$
\begin{align*}
\big( Z^{\mathsf L}_n (z) \big)_+ &
= \big( Z^{\mathsf L}_n (z) \big)_- 
\left[\begin{matrix} 
(\omega_0^{\mathsf L})^{-1} e^{- 2 \pi \operatorname i A^{\mathsf L}} \omega_0^{\mathsf L} 
& (\omega_0^{\mathsf L})^{-1} e^{- 2 \pi \operatorname i A^{\mathsf L}} \omega_0^{\mathsf L}
 \\[.1cm]
\pmb 0 & 
\omega_0^{\mathsf R} e^{2 \pi \operatorname i A^{\mathsf R}} (\omega_0^{\mathsf R})^{-1}
\end{matrix} \right],\\
\big( Z^{\mathsf R}_n (z) \big)_+ &
=
\left[\begin{matrix} 
\omega_0^{\mathsf R} e^{-2 \pi \operatorname i A^{\mathsf R}} (\omega_0^{\mathsf R})^{-1} & \pmb{0}
 \\[.1cm]
\omega_0^{\mathsf R} e^{-2 \pi \operatorname i A^{\mathsf R}} (\omega_0^{\mathsf R})^{-1}& 
(\omega_0^{\mathsf L})^{-1}
e^{2 \pi \operatorname i A^{\mathsf L}} \omega_0^{\mathsf L}
\end{matrix} \right]
\big( Z^{\mathsf R}_n (z) \big)_- ,
\end{align*}
for all $z\in\gamma$,
where $A^\mathsf L = h^\mathsf L (0)$, $A^\mathsf R = h^\mathsf R (0)$.
\end{enumerate}
In~\cite{BFAM}, we discuss the holomorphic properties of the structure matrices introduced in~\eqref{eq:Mn}.
We could prove that, in the Laguerre case, the structure matrices~$M^{\mathsf L}_n(z) $ and $M^\mathsf R_n(z)$ are, for each $n \in \mathbb N$, meromorphic on~$\mathbb C$, with singularity located at $z=0$, which happens to be a removable singularity or a simple~pole.

\subsection{Jacobi case}

Here we follow~\cite{PAMS_AB_AF_MM} considering, $\phi (z) = z (1-z)$, and $\omega$ be a regular Jacobi~type weight matrix, i.e.
 $ \omega= 
\left[
\begin{smallmatrix} 
\omega^{(1,1)} & \cdots & \omega^{(1,N)} \\
\vdots & \ddots & \vdots \\
\omega^{(N,1)} & \cdots & \omega^{(N,N)} 
\end{smallmatrix}
\right]
$, supported on $\gamma$, with
\begin{align*}
\omega^{(j, k)}(z) = \sum_{m \in I_{j, k}} \varphi_{m}(z) z^{\alpha_{m}}(1-z)^{\beta_m} \log ^{p_{m}} (z)\log^{q_m}(1-z),
&& z \in \gamma ,
\end{align*}
where $I_{j, k}$ denotes a finite set of indexes, $\operatorname{Re} (\alpha_{m})$, $\operatorname{Re} (\beta_m)>-1$, $p_{m}$, $q_m \in \mathbb{N}$,
and $\varphi_{m}$ is
H\"older continuous, bounded and non-vanishing on~$\gamma$.
We assume that the determination of the logarithm and the powers are taken along~$\gamma$. We will request, in the development of the theory, that the functions~$\varphi_{m}$ have a holomorphic extension to the whole complex plane.

This case have been studied in~\cite{PAMS_AB_AF_MM} and includes the non scalar examples of Jacobi type weights given in the literature~\cite{nuevo,McCc,McAg,McAg2,AgPJ,SvA}.

In order to state a Riemann--Hilbert problem for the Jacobi type weights we must add to the one presented in Section~\ref{subsec:14}, i.e. to (RH1)--(RH3), the conditions
\begin{description}
\item[(RH4)]
$
Y^{\mathsf L}_n (z) 
= \left[ \begin{matrix}
\operatorname{O} (1) & s^{\mathsf L}_{1}(z) \\[.1cm]
\operatorname{O} (1) & s^{\mathsf L}_{2}(z) 
 \end{matrix} \right] $,
 $
Y^{\mathsf R}_n (z) 
= 
\left[ \begin{matrix}
\operatorname{O} (1) & \operatorname{O} (1) \\ 
s^{\mathsf R}_1(z) & s^{\mathsf R}_2(z) 
 \end{matrix} \right] $, 
as 
$z \to 0$,
with
$\displaystyle \lim_{z \to 0} z s^{\mathsf L}_j(z) = \pmb 0$
and
$\displaystyle \lim_{z \to 0} z s^{\mathsf R}_j (z) = \pmb 0$,
$j=1,2$.

\item[(RH5)]
$
Y_{n}^{\mathsf L}(z)=
\left[ \begin{matrix}
\operatorname{O}(1) & r_{1}^{\mathsf L}(z) \\[.1cm]
\operatorname{O}(1) & r_{2}^{\mathsf L}(z)
 \end{matrix} \right]$,
$
Y_{n}^{\mathsf R}(z)
=\left[\begin{matrix}
\operatorname{O}(1) & \operatorname{O}(1) \\
r_{1}^{\mathsf R}(z) & r_{2}^{\mathsf R}(z)
\end{matrix}\right]
$,
as 
$z \to 1$,
with
$\displaystyle \lim_{z \to 1} (1-z) r_{j}^{\mathsf L}(z)= \pmb 0$
and
$\displaystyle \lim_{z \to 1} (1-z) r_{j}^{\mathsf R}(z)= \pmb 0$,
$j=1,2$. 
The $s^{\mathsf L}_{i}$,~$s^{\mathsf R}_{i}$ (respectively, $r^{\mathsf L}_{i}$ and $r^{\mathsf R}_{i}$) could be replaced by $\operatorname{o}({1}/{z})$, as $z\to 0$ (respectively, $\operatorname{o}({1}/({1-z}))$, as $z\to 1$).
The $ \operatorname{O} $ and $\operatorname{o}$ conditions are understood entry-wise.
\end{description}
The solution of~\eqref{eq:pearsonjacobileft} and~\eqref{eq:pearsonjacobiright} will have possibly branch points
at $0$ and $1$, cf.~\cite{wasow}. This means that there exist constant matrices, $\mathsf C_j^\mathsf L$, $\mathsf C_j^\mathsf R$, with $j =0,1$, such that
\begin{align*}
( \omega^{\mathsf L} (z))_- = (\omega^{\mathsf L} (z))_+ \mathsf C_0^\mathsf L, &&
(\omega^{\mathsf R} (z))_- = \mathsf C_0^\mathsf R (\omega^{\mathsf R} (z))_+, && \text{in} & (0,1) ,
 \\
(\omega^{\mathsf L} (z))_- = (\omega^{\mathsf L} (z))_+ \mathsf C_1^\mathsf L, &&
(\omega^{\mathsf R} (z))_- = \mathsf C_1^\mathsf R (\omega^{\mathsf R} (z))_+, && \text{in} & (1,+\infty) .
\end{align*}
The constant jump fundamental matrices, introduced in~\eqref{eq:zn} and~\eqref{eq:zetan}, i.e.~$Z^{\mathsf L}_n$ and~$Z^\mathsf R_n$ satisfy, for each $n \in \mathbb N$, the fol\-low\-ing properties: 
\begin{enumerate}

\item
Are holomorphic on $\mathbb C \setminus [0,+\infty)$.
\item
Present the fol\-low\-ing \emph{constant jump condition} on $(0,1)$
\begin{align*}
\big( Z^{\mathsf L}_n (z) \big)_+ &
= \big( Z^{\mathsf L}_n (z) \big)_- 
\left[\begin{matrix} 
\mathsf C_0^{\mathsf L} & \mathsf C_0^{\mathsf L}
 \\[.1cm]
\pmb 0 & \operatorname I
\end{matrix} \right], 
 &&
\big( {Z}^{\mathsf R}_n (z) \big)_+ 
=
\left[\begin{matrix} 
\operatorname I & \pmb{0}
 \\[.1cm]
\mathsf C_0^{\mathsf R} & \mathsf C_0^{\mathsf R}
\end{matrix} \right]
\big( {Z}^{\mathsf R}_n (z) \big)_- .
\end{align*}

\item
Present the fol\-low\-ing \emph{constant jump condition} on $(1,+\infty)$
\begin{align*}
\big( Z^{\mathsf L}_n (z) \big)_+ 
& = 
\big( Z^{\mathsf L}_n (z) \big)_- 
\left[\begin{matrix} 
\mathsf C_1^{\mathsf L} & \pmb 0
 \\[.1cm]
\pmb 0 & \mathsf C_1^{\mathsf R}
\end{matrix} \right],
 &&
\big( {Z}^{\mathsf R}_n (z) \big)_+
=
\left[\begin{matrix} 
\mathsf C_1^{\mathsf R} & \pmb 0
 \\[.1cm]
\pmb 0 & \mathsf C_1^{\mathsf L}
\end{matrix} \right]
\big( {Z}^{\mathsf R}_n (z) \big)_- .
\end{align*}
\end{enumerate}
In~\cite{PAMS_AB_AF_MM}, we discuss the holomorphic properties of the structure matrices introduced in~\eqref{eq:Mn}.
We could prove that, in the Jacobi case, the structure matrices~$M^{\mathsf L}_n(z) $ and $M^\mathsf R_n(z)$ are, for each $n \in \mathbb N$, meromorphic on~$\mathbb C$, with singularities located at $z=0$ and $z=1$, which happens to be a removable singularity or a simple~pole.

\subsection{First order equations}

The analytic properties of the matrix, $M_n^\mathsf L$, just studied, represent an open door to the differential properties of $\big\{ Z_n^{\mathsf L} \big\}_{n \in \mathbb N}$, defined in~\eqref{eq:zn}, as well as for $\big\{ Y_n^{\mathsf L} \big\}_{n \in \mathbb N}$ (the same can be done in the right case). In fact,
from the analytic properties of $M_n^\mathsf L$ we~get~that
\begin{align}
\label{eq:primeiraordemZnL}
\phi (z) \, \big( Z_n^\mathsf L (z) \big)^\prime = \widetilde M_n^\mathsf L (z) \, Z_n^\mathsf L (z) , && n \in \mathbb N .
\end{align}
where $ \big\{ \widetilde M_n^\mathsf L \big\}_{n \in \mathbb N} $ is a sequence of entire functions defined by
\begin{align}
\label{eq:wide}
\widetilde M_n^\mathsf L (z) \coloneqq \phi (z) \, M_n^\mathsf L (z) , && n \in \mathbb N ,
\end{align} 
From~\eqref{eq:znright} and taking into account
\begin{align*}
\Big( \big( Z_n^\mathsf L (z) \big)^{-1}\Big)^\prime
 = - \big( Z_n^\mathsf L (z) \big)^{-1} \big( Z_n^\mathsf L (z) \big)^{\prime} \big( Z_n^\mathsf L (z) \big)^{-1} ,
\end{align*}
we arrive to
\begin{align}\label{eq:primeiraordemZnR}
\phi (z) \, \big( Z_n^\mathsf R (z) \big)^\prime = Z_n^\mathsf R (z) \, \widetilde M_n^\mathsf R (z) , && n \in \mathbb N ,
\end{align}
where by~\eqref{MnLR}
\begin{align*}
\widetilde M_n^\mathsf R (z) 
 & = 
-
\left[ \begin{matrix}
\pmb 0 & - \operatorname I \\ \operatorname I & \pmb 0
 \end{matrix} \right]
 \widetilde M_n^\mathsf L (z) 
 \left[ \begin{matrix}
\pmb 0 & \operatorname I \\ - \operatorname I & \pmb 0
 \end{matrix} \right] ,
&
n \in \mathbb N .
\end{align*}
Hence, the entries of the matrices
\begin{align*}
\widetilde M_n^\mathsf L (z) =
\begin{bmatrix}
\mathsf L_n^{1,1} & \mathsf L_n^{1,2} \\ \mathsf L_n^{2,1} & \mathsf L_n^{2,2}
\end{bmatrix}
 && \text{and} &&
\widetilde M_n^\mathsf R (z) =
\begin{bmatrix}
\mathsf R_n^{1,1} & \mathsf R_n^{1,2} \\ \mathsf R_n^{2,1} & \mathsf R_n^{2,2}
\end{bmatrix} ,
\end{align*}
are related by
\begin{align*}
 \mathsf R_n^{2,2} = - \mathsf L_n^{1,1} , &&
 \mathsf R_n^{2,1} = \mathsf L_n^{1,2} , &&
 \mathsf R_n^{1,2} = \mathsf L_n^{2,1} , &&
 \mathsf R_n^{1,1} = - \mathsf L_n^{2,2} , && n \in \mathbb N .
\end{align*}
Now, from~\eqref{eq:primeiraordemZnL} and~\eqref{eq:primeiraordemZnR} we get the first order structure relations for the sequences $\big\{ P_n^\mathsf L \big\}_{n \in \mathbb N}$, $\big\{ Q_n^\mathsf L \big\}_{n \in \mathbb N}$, $\big\{ P_n^\mathsf R \big\}_{n \in \mathbb N}$, and $\big\{ Q_n^\mathsf R \big\}_{n \in \mathbb N}$
\begin{align*}
\phi (z) \big(P^\mathsf L_n (z) \big)^{\prime}+ P^\mathsf L_n (z) h^\mathsf L(z)
 & = \mathsf L^{1,1}_n (z) P^\mathsf L_n (z) - \mathsf L^{1,2}_n (z) C_{n-1} P^\mathsf L_{n-1} (z) , 
 \\ 
\phi (z) \big(Q^\mathsf L_n (z) \big)^{\prime} - Q^\mathsf L_n (z) h^\mathsf R(z) 
 & = \mathsf L^{1,1}_n (z) Q^\mathsf L_n (z) - \mathsf L^{1,2}_n (z) C_{n-1} Q^\mathsf L_{n-1} (z) , \\
\phi (z) \big(P^\mathsf R_n (z) \big)^{\prime}+ h^\mathsf R(z) P^\mathsf R_n (z)
 & = - P^\mathsf R_n (z) \mathsf L^{2,2}_n (z) - P^\mathsf R_{n-1} (z) C_{n-1} \mathsf L^{1,2}_n (z) ,
  \\
\phi (z) \big(Q^\mathsf R_n (z) \big)^{\prime} - h^\mathsf L(z)Q^\mathsf R_n (z) 
 & = - Q^\mathsf R_n (z) \mathsf L^{2,2}_n (z) - Q^\mathsf R_{n-1} (z) C_{n-1} \mathsf L^{1,2}_n(z) .
\end{align*}
Next we give the representation of the matrix $M_n^\mathsf L$ when we are in one of the cases we have just studied, with
$h^{\mathsf L} (z)=A^{\mathsf L} z+B^{\mathsf L}$
and
$h^{\mathsf R}(z)=A^{\mathsf R} z+B^{\mathsf R}$.

For example in~\cite{BAFMM} we get for the matrix $M_n^\mathsf L$ in the Hermite case the representation
$M^\mathsf L_n (z) = {\mathcal A}^\mathsf L z + \mathcal K_n^\mathsf L$, with
\begin{align*} 
\mathcal A^\mathsf L & 
 = 
\left[ \begin{smallmatrix}
A^{\mathsf L} & 0_N \\
0_N & -A^{\mathsf R} \end{smallmatrix} \right]
, &
\mathcal K_n^\mathsf L 
& 
 =
\left[ \begin{smallmatrix}
B^{\mathsf L} + \big[ p_{\mathsf L , n}^1 , A^{\mathsf L} \big] 
 & C_n^{-1} A^{\mathsf R} + A^{\mathsf L} C_n^{-1} \\
- C_{n-1} A^{\mathsf L} - A^{\mathsf R} C_{n-1} 
 & -B^{\mathsf R} -\big[ q_{\mathsf L , n-1}^1 , A^{\mathsf R} \big]
\end{smallmatrix} \right] , && n \in \mathbb N.
\end{align*}
In the Laguerre case we get following~\cite{BFAM} that the matrix $\widetilde M_n^\mathsf L$ defined in~\eqref{eq:wide} is given~by
\begin{align*}
\widetilde M^{\mathsf L}_n (z) & =
\left[ \begin{matrix} 
A^{\mathsf L} z +[ p^1_{\mathsf L,n}, A^{\mathsf L} ] + n I_N + B^{\mathsf L} & A^{\mathsf L} C_n^{-1} + C_n^{-1} A^{\mathsf R} \\[.1cm]
-C_{n-1} A^{\mathsf L} -A^{\mathsf R} C_{n-1} & -A^{\mathsf R} z + [ p^1_{\mathsf R,n} ,A^{\mathsf R} ] - n I_N- B^{\mathsf R} 
 \end{matrix} \right] , && n \in \mathbb N.
\end{align*}
For the Jacobi case we get in~\cite{PAMS_AB_AF_MM}, for all $ n \in \mathbb N$, that
the matrix $ \widetilde{M}^{\mathsf L}_n$ defined by~\eqref{eq:wide} is
\begin{align*}
 \widetilde{M}^{\mathsf L}_n (z) & = 
\resizebox{.855\hsize}{!}{$
 \left[ \begin{matrix} 
\left(A^{\mathsf L}-nI_N\right) z +[ p^1_{\mathsf L,n}, A^{\mathsf L} ] + p^1_{\mathsf L,n}+ n I_N + B^{\mathsf L}
 &
A^{\mathsf L} C_n^{-1} + C_n^{-1} A^{\mathsf R}-(2n+1)C_{n}^{-1}
 \\[.15cm]
-C_{n-1} A^{\mathsf L} -A^{\mathsf R} C_{n-1}+(2n-1)C_{n-1}
&
\left(nI_N-A^{\mathsf R}\right) z + [ p^1_{\mathsf R,n} ,A^{\mathsf R} ]- p^1_{\mathsf R,n} - n I_N- B^{\mathsf R}
 \end{matrix} \right] 
 $} .
 \end{align*}

\section{Second order differential relations} \label{sec:3}

Taking derivative on~\eqref{eq:primeiraordemZnL} we arrive to
\begin{multline*}
\phi^\prime (z) \big( Z_n^\mathsf L (z) \big)^\prime \big( Z_n^\mathsf L (z) \big)^{-1}
+ \phi (z) \big( Z_n^\mathsf L (z)\big)^{\prime\prime} \big( Z_n^\mathsf L (z)\big)^{-1}
 \\ - \phi (z) \big( Z_n^\mathsf L (z) \big)^{\prime} \big( Z_n^\mathsf L (z) \big)^{-1}
\big( Z_n^\mathsf L (z) \big)^{\prime} \big( Z_n^\mathsf L (z)\big)^{-1}
 = \big( \widetilde M_n^\mathsf L (z) \big)^\prime ,
\end{multline*}
and using again~\eqref{eq:primeiraordemZnL} we get
\begin{align*}
\phi (z) \big( Z_n^\mathsf L (z) \big)^{\prime\prime} 
 = \left\{ \big( \widetilde M_n^\mathsf L (z) \big)^\prime - \frac{\phi^\prime (z) }{\phi (z)} \widetilde M_n^\mathsf L (z)
 + \frac{\big( \widetilde M_n^\mathsf L (z)\big)^2}{\phi (z)} \right\} \, Z_n^\mathsf L (z) ,
\end{align*}
From~\eqref{eq:pearsonjacobileft} and~\eqref{eq:pearsonjacobiright} we have
\begin{align*}
\phi(z){\left( \omega^{\mathsf L} (z) \right)}^{\prime\prime}{\left( \omega^{\mathsf L} (z) \right)}^{-1}
 & = {\big(h^{\mathsf L} (z) \big)}^\prime -\frac{\phi^\prime (z)}{ \phi(z)} h^{\mathsf L} (z)
\frac{{\big(h^{\mathsf L} (z) \big)}^2}{\phi(z)} , \\
\phi (z){\left( \big( \omega^{\mathsf R} (z) \big)^{-1}\right)}^{\prime\prime} \omega^{\mathsf R} (z)
 & =
\frac{\big({h^{\mathsf R} (z) \big)}^2}{\phi(z)}+\frac{\phi^\prime (z)}{\phi(z)}h^{\mathsf R} (z)-{\big(h^{\mathsf R} (z) \big)}^\prime .
\end{align*}
Now, since
\begin{multline*}
\phi(z)\left(Z_{n}^{\mathsf L}\right)^{\prime \prime}{\left(Z_{n}^{\mathsf L}\right)}^{-1}
=\phi (z){(Y_n^{\mathsf L})}^{\prime\prime}Y_n^{\mathsf L}+{\left(Y_n^{\mathsf L}\right)}^\prime
\left[\begin{matrix}
2h^{\mathsf L}& 0_N \\
0_N & -2h^{\mathsf R}
\end{matrix}
\right]
{(Y_n^{\mathsf L})}^{-1} \\
+Y_n^{\mathsf L}\left[\begin{matrix}
{\phi(z)\left( \omega^{\mathsf L}\right)}^{\prime\prime}{\left( \omega^{\mathsf L}\right)}^{-1} & \pmb 0 \\
\pmb 0 & { \phi(z)\left(( \omega^{\mathsf R})^{-1}\right)}^{\prime\prime} \omega^{\mathsf R} 
\end{matrix}\right]
{(Y_n^{\mathsf L})}^{-1} ,
\end{multline*}
we finally get
\begin{multline}
\label{eq:secondorderL} 
\phi(z) \big(Y^\mathsf L_n\big)^{\prime\prime} 
+ \big(Y^\mathsf L_n\big)^{\prime} 
\left[ \begin{matrix} 
 2 h^{\mathsf L}+ \phi^\prime(z) \operatorname I & \pmb 0 \\ 
 \pmb 0 & - 2 h^{\mathsf R} + \phi^\prime( z) \operatorname I
 \end{matrix} \right] 
 \\
 + Y_n^\mathsf L (z)
 \left[ \begin{matrix}
 \mathcal N (h^{\mathsf L}) & \pmb 0 \\ 
 \pmb 0 &\mathcal N (- h^{\mathsf R}) 
 \end{matrix} \right] 
 = \mathcal N ( \widetilde{M}^\mathsf L_n )Y^\mathsf L_n ,
 \end{multline}
 where $\displaystyle 
\mathcal N (F(z)) = F^{\prime}(z)+\frac{F^2(z)}{\phi(z)}$.

The same procedure leads us to
\begin{multline}
\label{eq:secondorderR}
\phi(z) \big(Y^\mathsf R_n\big)^{\prime\prime} 
+
\left[ \begin{matrix} 
2 h^{\mathsf R} + \phi^\prime (z) \operatorname I & \pmb 0 \\ 
\pmb 0 & - 2 h^{\mathsf L} + \phi^\prime (z) \operatorname I 
 \end{matrix} \right] 
 \big(Y^\mathsf R_n\big)^{\prime} 
 \\ + 
\left[ \begin{matrix} 
 \mathcal N (h^{\mathsf R}) & \pmb 0 \\ 
 \pmb 0 & \mathcal N (-h^{\mathsf L}) 
 \end{matrix} \right] 
Y_n{^\mathsf R} (z)
 = 
Y^\mathsf R_n \mathcal N (\widetilde{M}^\mathsf R_n )
 .
 \end{multline}
We can see that~\eqref{eq:secondorderL} and\eqref{eq:secondorderR} enclose second order differential relations for $P_n^\mathsf L$, $P_n^\mathsf R$, $Q_n^\mathsf L$ and $Q_n^\mathsf R$ that we have explicitly determine in~\cite{BAFMM}, \cite{BFAM} and~\cite{PAMS_AB_AF_MM}, for the Hermite, Laguerre and Jacobi cases, respectively.

We could recover the scalar classical cases of Hermite, Laguerre and Jacobi form the study we have presented for the matrix~cases.

\subsection{Hermite case}

For example for the scalar Hermite case we get that
\begin{align*}
H_n^{\prime\prime} (z) - 2 z H_n^{\prime} (z) & = - 4 \gamma_n H_n (z) , \\ 
Q_n^{\prime\prime} (z) + 2 z Q_n^{\prime} (z) & = - ( 4 \gamma_n + 2 ) Q_n (z) , && n \in \mathbb N ,
\end{align*}
where, $\big\{ H_n \big\}_{n \in \mathbb N}$ is the sequence of monic orthogonal polynomials orthogonal with respect to the weight $w (x) = e^{-x^2}$ in $\mathbb R$, satisfying a three term recurrence relation
\begin{align*}
x H_n (x) = H_{n+1} (x) + \gamma_n H_{n-1} (x) , && n \in \mathbb N ,
\end{align*}
with $H_{-1} (x) = 0$, $H_0 (x) = 1$, $\gamma_{n} = \frac n 2$, and $ \big\{ Q_n \big\}_{n \in \mathbb N}$ is the sequence of second kind functions associated with $\big\{ H_n \big\}_{n \in \mathbb N}$ and $w$.

\subsection{Berezanskii Laguerre case}

In the scalar monic Laguerre polynomials, $\big\{ L_n^\alpha \big\}_{n \in \mathbb N}$, orthogonal with respect to the weight $w^\alpha (x) = x^\alpha e^{-x}$ in
 \linebreak
$(0,+\infty)$, with $\alpha \in ( -1, +\infty ) 
$, and second kind functions, $\big\{ Q_n^\alpha \big\}_{n \in \mathbb N}$, we got in~\cite{BFAM} that
\begin{align*}
z \big( L_n^\alpha \big)^{\prime\prime} (z) - ( z - \alpha -1) \big( L_n^\alpha \big)^{\prime} (z) &= - n \, L_n^\alpha (z) , \\ 
z \big( Q_n^\alpha \big)^{\prime\prime} (z) + ( z - \alpha +1 ) \big( Q_n^\alpha \big)^{\prime} (z) &= - ( n + 1 ) \, Q_n^\alpha (z) , && n \in \mathbb N.
\end{align*}
Now, considering the Berezanskii example, of monic matrix orthogonal polynomials, $\big\{ \mathbb P_n \big\}_{n \in \mathbb N}$, given in~\eqref{eq:diagonalpol}, with $\omega^1 (x) = w^\alpha (x)$ and $\omega^2 (x) = w^\beta (x)$, we get that
\begin{align*}
z \mathbb P_n^{\prime\prime} (z) - 
\mathbb P_n^{\prime} (z) \Psi_1 (z) &= - n \, \mathbb P_n (z) , \\ 
z \mathbb Q_n^{\prime\prime} (z) +
\mathbb Q_n^{\prime} (z) \Psi_2 (z) &= - (n+1) \, \mathbb Q_n (z) , && n \in \mathbb N ,
\end{align*}
where
\begin{align*}
\Psi_1 (z)
 & =
\frac 1 2
\begin{bmatrix}
 - {\alpha} - {\beta} + 2(z-1) & {\beta-\alpha} \\[.1cm]
 {\beta-\alpha} & - {\alpha} - {\beta} + 2(z-1)
\end{bmatrix}
 , 
 \\
\Psi_2 (z)
 & =
\frac 1 2
\begin{bmatrix}
 - {\alpha} - {\beta} + 2(z+1) & {\beta-\alpha} \\[.1cm]
 {\beta-\alpha} & - {\alpha} - {\beta} + 2(z+1)
\end{bmatrix}
 .
\end{align*}

\subsection{Berezanskii Jacobi case}

Let us consider the weight $W^{\alpha, \beta}(z)=z^\alpha (1-z)^\beta$, in $[-1,1]$, with
 $\alpha$, $\beta$ scalars in~$(-1, \infty)$. Then, the scalar second order equation for $\big\{p_n\big\}_{n\in\mathbb N}$ and $\big\{q_n\big\}_{n\in\mathbb N}$ (cf.~for example~\cite{Szego}) is given by
\begin{align*}
 & z(1-z) p_n^{\prime\prime} (z) + \big(1+\alpha - (\alpha+\beta +2)z\big) p_n^{\prime} (z) + n(\alpha+\beta+n+1) p_n (z) =0 , \\ 
 & z(1-z) q_n^{\prime\prime} (z) + \big(1-\alpha + (\alpha+\beta -2)z\big) q_n^{\prime} (z) +(n+1)(\alpha+\beta+n) q_n (z) =0 .
\end{align*}
Now, we can construct the Berezanskii monic matrix orthogonal polynomials, $\big\{ \mathbb P_n \big\}_{n \in \mathbb N}$, given in~\eqref{eq:diagonalpol}, with $\omega^1 (x) = W^{\alpha,\beta} (x)$ and $\omega^2 (x) = W^{\beta,\alpha} (x)$, we get that
\begin{align*}
z \mathbb P_n^{\prime\prime} (z) + 
\mathbb P_n^{\prime} (z) \Psi_1 (z) + n(\alpha+\beta+n+1) \mathbb P_n (z) &= \pmb 0 , \\ 
z \mathbb Q_n^{\prime\prime} (z) +
\mathbb Q_n^{\prime} (z) \Psi_2 (z) + (n+1)(\alpha+\beta+n) \mathbb Q_n (z) &= \pmb 0 , && n \in \mathbb N ,
\end{align*}
where
\begin{align*}
\Psi_1 (z)
 & =
\frac 1 2
\begin{bmatrix}
 -{\alpha}-{\beta}+ 2(z -1) & {\beta-\alpha} \\[.1cm]
 {\beta-\alpha} & -{\alpha}- {\beta} + 2(z -1)
\end{bmatrix}
 , \\
\Psi_2 (z)
 & =
\frac 1 2
\begin{bmatrix}
 (2 z-1) (\alpha+\beta-2) & {\beta-\alpha} \\[.1cm]
 {\beta-\alpha} & (2 z-1) (\alpha+\beta-2)
\end{bmatrix}
 .
\end{align*}

\section{Discrete Painlev\'e type matrix equations} \label{sec:4}

We now discuss the case 
of a weight $\omega$
satisfying a generalized matrix Pearson equation
\begin{align*}
 \omega^{\prime} (z) = (\lambda + \mu z + \nu z^2 ) \omega (z)
 , && \text{with} && \mu,\nu\in\mathbb C^{N\times N} .
\end{align*}
The structure matrix, cf.~\eqref{eq:Mn}, is given by
$M_n (z) = M_n^0 z^2 + M_n^1 z + M_n^2$~with
\begin{align*}
M_n^0 &= 
\begin{bmatrix} 
\nu & \pmb 0 \\ 
\pmb 0 & \pmb 0
\end{bmatrix}, 
\phantom{olaolaola} 
M_n^1 
 = 
\begin{bmatrix} 
\mu - \big[ \nu , p_n^1 \big] & \nu C_n^{-1} \\[.05cm] 
- C_{n-1} \nu & \pmb{0} 
\end{bmatrix}, 
\end{align*}
\begin{align*}
M_n^2 &=
\resizebox{.855\hsize}{!}{$
\begin{bmatrix} 
\lambda - \big[ \beta , p_n^1 \big] - \big[ \nu , p_n^2 \big] + \nu \big( p_n^1 \big)^2 - p_n^1 \nu \, p_n^1 + \nu C_n^{-1} C_{n-1} & \big( \mu - \big[ \nu , p_n^1 \big] + \gamma \beta_n \big) C_n^{-1} \\[.05cm] 
- C_{n-1} \, \big( \mu + p_{n-1}^1 \nu - \nu p_{n}^1 \big) & 
- C_{n-1} \nu \, C_n^{-1}
\end{bmatrix} $}.
\end{align*}
Then in~\cite{BAFMM}, we prove that the three term recursion coefficients $\gamma_n$ can be expressed directly in terms of the recursion coefficients $\beta_n$, for all $n \in \mathbb N$,
\begin{align*}
\gamma_{n+1}=-(n+1) \Big( \beta + \Big[ \gamma , \sum\limits_{k=0}^{n-1} \beta_k \Big] + \gamma (\beta_n + \beta_{n+1} ) \Big)^{-1}.
\end{align*}
The coefficients $\beta_n$ fulfill, for all $n \in \mathbb N$, the following non-Abelian alt-dPI,
 \begin{multline*}
\lambda + \nu \big( \gamma_n + \gamma_{n+1} + \beta_n^2 \big) - \mu \beta_n + \Big[ \beta , \sum\limits_{k=0}^{n-1}\beta_k \Big] \big(I_N+ \beta_n\big) \\+ \big[ \nu , \sum\limits_{m=1}^{n-1}\gamma_m-\sum\limits_{
 0\leq k<m\leq n-1}\beta_m\beta_k \big] 
+ \Big[ \nu , \sum\limits_{k=0}^{n-1}\beta_k \Big] \sum\limits_{k=0}^{n-1}\beta_k = \pmb 0 .
\end{multline*}
Now, we consider a Laguerre type weight matrix $W $ such that
\begin{align*}
z W^{\prime}(z) = ( h_0 + h_1 z + h_2 z^2) W (z) .
\end{align*}
Then, the entries of the structure matrix, $\widetilde M_n$, cf.~\eqref{eq:wide}, are given by
\begin{align*}
 \widetilde{M}_n^{11} & = 
(h_0 + h_1 z + h_2 z^2) + h_1 q_{\mathsf R,n-1}^{1} + p_{\mathsf L,n}^{1} h_1 \\
& \phantom{olaola} + z
(h_2 q_{\mathsf R,n-1}^{1} + p_{\mathsf L,n}^{1} h_2)
+ h_2 q_{\mathsf R,n-1}^{2}
+ p_{\mathsf L,n}^{2} h_2
+ p_{\mathsf L,n}^{1} h_2 q_{\mathsf R,n-1}^{1}
+n \operatorname I
 , \\
 \widetilde{M}_n^{12} & = (h_1 + h_2 z + h_2 q_{\mathsf R,n}^{1} + p_{\mathsf L,n}^{1} h_2)
C_n^{-1} ,
\end{align*}
\begin{align*}
\widetilde{M}_n^{21} & = -C_{n-1} (h_1 + h_2 z +
h_2 q_{\mathsf R,n-1}^{1} + p_{\mathsf L,n-1}^{1} C_{\mathsf L}) ,
\\
\widetilde{M}_n^{22} & = -C_{n-1} h_2 C_n^{-1} 
 -n \operatorname I .
\end{align*}
From these we proved in~\cite{BFAM} that the following system of matrix equations is a noncommutative version of an instance of discrete Painlev\'e~IV equation:
\begin{align*}
& (2n+1) \operatorname I + h_0 
+ h_2 (\gamma_{n+1} + \gamma_{n} ))
+(h_2 \beta_{n} + h_1 ) \beta_{n}
 \\
& \phantom{olaola} = 
 \Big[ \sum_{k=0}^{n-1} \beta_k , h_2 \Big] \sum_{k=0}^{n} \beta_k
- \Big[\sum_{i,j=0}^{n-1} \beta_i \beta_j - \sum_{k=0}^{n-1}\gamma_k
, h_2 \Big] - \Big[ \sum_{k=0}^{n-1} \beta_k , h_1 \Big], \\
 & \beta_n -\gamma_{n} \big( h_2 (\beta_n + \beta_{n-1} ) + h_1 \big) 
+ \big( h_2 (\beta_n + \beta_{n+1} ) + h_1 \big) \gamma_{n+1}
 \\
 &\phantom{olaola}
 = -\gamma_{n} \Big[ \sum_{k=0}^{n-1} \beta_k , h_2 \Big] +
\Big[-\sum_{k=0}^{n-1} \beta_k , h_2 \Big] \gamma_{n+1} .
\end{align*}
We can also consider, the weight matrix $W (z) $
such that
\begin{align*}
z(1-z) W^{\prime} (z) &= ( h_0 +
h_1 z + h_2 z^2) W (z),
\end{align*}
which is a Jacobi type weight matrix.
It was proved in~\cite{PAMS_AB_AF_MM} that the entries of the structure matrix, $\widetilde M_n$, cf.~\eqref{eq:wide}, are given by
\begin{align*}
& \widetilde{M}_n^{11} = 
(h_0 + h_1 z + h_2 z^2) + h_1
q_{\mathsf R,n-1}^{1} + p_{\mathsf L,n}^{1} h_1 + z(h_2 q_{\mathsf R,n-1}^{1}+ p_{\mathsf L,n}^{1} h_2) \\
& \phantom{olaolaolaolaola}
+ h_2 q_{\mathsf R,n-1}^{2}
+ p_{\mathsf L,n}^{2} h_2
+ p_{\mathsf L,n}^{1} h_2 q_{\mathsf R,n-1}^{1}+n \operatorname I -z n \operatorname I +p_{\mathsf L,n}^1 , 
\\
& \widetilde{M}_n^{12} = (h_1 + h_2 z + h_2 q_{\mathsf R,n}^{1} + p_{\mathsf L,n}^{1} h_2 )
C_n^{-1} -(2n+1)C_{n}^{-1} ,
\\
& \widetilde{M}_n^{21} = -C_{n-1} (h_1 + h_2 z +
h_2 q_{\mathsf R,n-1}^{1} + p_{\mathsf L,n-1}^{1} h_2) +(2n-1)C_{n-1} ,
\\
 & \widetilde{M}_n^{22} = -C_{n-1} h_2 C_n^{-1} 
-n \operatorname I +z n \operatorname I -p_{\mathsf R,{n}}^1 .
\end{align*}
We plug this information in the equations of zero curvature presented in Section~\ref{sec:2} and get
\begin{align*}
& \hspace{-.225cm} (2n+1) \operatorname I + h_0
+ h_2 (\gamma_{n+1} + \gamma_{n} )
+\left(h_2 \beta_{n} + h_1 -(2n+1) \operatorname I \right) \beta_{n} +\sum_{k=0}^{n-1} \beta_k \\
& \hspace{-.225cm} \nonumber
 +C_{n}^{-1}\sum_{k=0}^{n} \beta_k C_{n} = 
 \Big[ \sum_{k=0}^{n-1} \beta_k , h_2 \Big] \sum_{k=0}^{n} \beta_k 
- \Big[\sum_{i,j=0}^{n-1} \beta_i \beta_j - \sum_{k=0}^{n-1}\gamma_k
, h_2 \Big] - \Big[ \sum_{k=0}^{n-1} \beta_k, h_1 \Big],
 \\
 & \hspace{-.225cm}
\beta_n -(\beta_n )^2-\gamma_{n} \big( h_2 (\beta_n + \beta_{n-1} ) + h_1 -(2n-1) \operatorname I \big) + \big( h_2 (\beta_n + \beta_{n+1}) + h_1 \\
 &\hspace{-.225cm} \nonumber
-(2n+3) \operatorname I \big) \gamma_{n+1}
= \gamma_{n} \Big[ \sum_{k=0}^{n-2} \beta_k , h_2 \Big] -
\Big[\sum_{k=0}^{n-1} \beta_k , h_2 \Big] \gamma_{n+1} -\Big[\sum_{k=0}^{n-1} \beta_k , \sum_{k=0}^{n} \beta_k \Big] .
\end{align*}
In~\cite{PAMS_AB_AF_MM} it is proven that this system contains a non-commutative version of an instance of discrete Painlev\'e~IV equation.

\section*{Acknowledgements}

AB acknowledges Centro de Matem\'atica da Universidade de Coimbra (CMUC) -- UID/MAT/00324/2020, funded by the Portuguese Government through FCT/MEC and co-funded by the European Regional Development Fund through the Partnership Agreement PT2020.
 
AFM and AF thanks CIDMA Center for Research and Development in Mathematics and Applications (University of Aveiro) and the Portuguese Foundation for Science and Technology (FCT) within project UIDB/04106/\linebreak2020 and UIDP/04106/2020.
 
MM was partially supported by the Spanish ``Agencia Estatal de Investigaci\'on'' research project [PGC2018-096504-B-C33], \emph{Ortogonalidad y~Apro\-ximaci\'on: Teor\'\i a y Aplicaciones en F\'\i sica Mate\-m\'a\-tica} and research project [PID2021- 122154NB-I00], \emph{Ortogonalidad y aproximaci\'on con aplicaciones en machine learning y teor\'\i a de la probabilidad}.

%
%
%

\end{document}